\numberwithin{equation}{section}
\numberwithin{figure}{section}
\theoremstyle{plain}
\newtheorem{thm}{Theorem}[section]
\newtheorem{lem}[thm]{Lemma}
\newtheorem{cor}[thm]{Corollary}
\newtheorem{prop}[thm]{Proposition}
\theoremstyle{remark}
\newcommand{\M}{\operatorname{M}}
\newcommand{\Hf}{\operatorname{H}}
\newcommand{\wt}{\operatorname{wt}}
\title{A $q$-enumeration of lozenge tilings of a hexagon with three dents}
\author{Tri Lai\footnote{This research was supported in part by the Institute for Mathematics and its Applications with funds provided by the National Science Foundation (grant no. DMS-0931945).}\\
\small Institute for Mathematics and its Applications\\[-0.8ex]
\small University of Minnesota\\[-0.8ex]
\small Minneapolis, MN 55455\\
\small email: \texttt{tmlai@ima.umn.edu}\\
\small website: \url{http://www.ima.umn.edu/~tmlai/}
}
\date{\small Mathematics Subject Classifications: 05A15,  05C30, 05C70}
\begin{document}
\maketitle

\begin{abstract}
We $q$-enumerate lozenge tilings of a hexagon with three bowtie-shaped regions have been removed from three non-consecutive sides. The unweighted version of the result generalizes a problem posed by James Propp on enumeration of lozenge tilings of a hexagon of side-lengths $2n,2n+3,2n,2n+3,2n,2n+3$ (in cyclic order) with the central unit triangles on the $(2n+3)$-sides removed.

\bigskip\noindent \textbf{Keywords:} perfect matching, lozenge tiling, dual graph,  graphical condensation.
\end{abstract}

\section{Introduction}
A \emph{plane partition} is a rectangular array of non-negative integers so that all rows are weakly decreasing from left to right and all columns are weakly decreasing from top to bottom. Plane partitions having $a$ rows and $b$ columns with entries at most $c$  are usually identified with their 3-D interpretations---piles of unit cubes fitting in an $a\times b \times c$ box.  The latter are in bijection with the lozenge tilings of a semi-regular hexagon of side-lengths $a,b,c,a,b,c$ (in clockwise order, starting from the northwest side) on the triangular lattice, denoted by $Hex(a,b,c)$. Here, a \emph{lozenge} is a union of any two unit equilateral triangles sharing an edge; and a \emph{lozenge tiling} of a \emph{region}\footnote{The regions considered in our paper are always finite connected regions on the triangular lattice.} is a covering of the region by lozenges so that there are no gaps or overlaps.  The \emph{volume} (or the \emph{norm}) of the plane partition $\pi$ is defined to be the sum of all its entries, and denoted by $|\pi|$.

Let $q$ be an indeterminate. The \emph{$q$-integer} is defined by $[n]_q:=1+q+q^2+q^{n-1}$. We also define the \emph{$q$-factorial} $[n]_q!:=[1]_q[2]_q\dots[n]_q$, and the \emph{$q$-hyperfactorial} $\Hf_q(n):=[0]_q![1]_q!\dots[n-1]_q!$. MacMahon's classical theorem  \cite{Mac}  states that
\begin{equation}\label{qMacMahon}
\sum_{\pi}q^{|\pi|}=\frac{\Hf_q(a)\Hf_q(b)\Hf_q(c)\Hf_q(a+b+c)}{\Hf_q(a+b)\Hf_q(b+c)\Hf_q(c+a)},
\end{equation}
where the sum of the left-hand side is taken over all plane partitions $\pi$ fitting in an $a\times b \times c$ box.

The $q=1$ specialization of MacMahon's theorem is equivalent to the fact that the number of lozenge tilings of the hexagon $Hex(a,b,c)$ is equal to
\begin{equation}\label{MacMahon}
\frac{\Hf(a)\Hf(b)\Hf(c)\Hf(a+b+c)}{\Hf(a+b)\Hf(b+c)\Hf(c+a)},
\end{equation}
where $\Hf(n)=\Hf_1(n)=0!1!\dotsc(n-1)!$ is the ordinary hyperfactorial.  The theorem of MacMahon inspired a large body of work, focusing on enumeration of  lozenge tilings of hexagons with defects (see e.g. \cite{Ciucu1, Ciucu2, CF14, CF15, CK2, CLP, Eisen, CEKZ, Gessel}, or the references in \cite{Propp,Propp2} for a more extensive list). Put differently, the MacMahon's theorem give a $q$-enumeration of lozenge tilings of a semi-regular hexagon. However, such $q$-enumerations are \emph{rare} in the domain of enumeration of lozenge tilings. Together with the related work \cite{Tri}, this paper gives such a rare $q$-enumeration.

 In 1999, James Propp \cite{Propp} published a list of $32$ open problems in the field of enumeration of tilings (equivalently, perfect matchings). Problem 3 on this list asks for the number of lozenge of tilings of a  hexagon of side-lengths\footnote{From now on, we always list the side-lengths of a hexagon in clockwise order, starting from the northwest side.} $2n+3,2n,2n+3,2n,2n+3,2n$, where three central unit triangles have been removed from its long sides (see Figure \ref{original3dent} for the case when $n=2$).  Theresia Eisenk\"{o}lbl \cite{Eisen} solved and generalized this problem by computing the number of lozenge tilings of a hexagon with side-lengths $a,$ $b+3,$  $c,$
$a+3,$ $b,$ $c+3,$ where an arbitrary unit triangle has been removed from each of  the $(a+3)$-, $(b+3)$- and $(c+3)$-sides. Her proof uses non-intersecting lattice paths, determinants,
and the Desnanot-Jacobi determinant identity \cite[pp. 136-149]{Mui} (sometimes mentioned as \emph{Dodgson condensation} \cite{Dodgson}).

\begin{figure}\centering
\includegraphics[width=6cm]{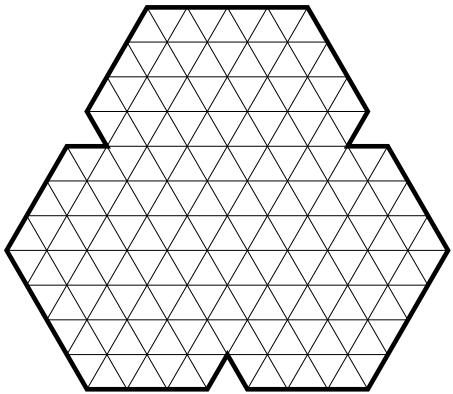}
\caption{A hexagon of side-length $4,7,4,7,4,7$ with three central unit triangles removed from the $7$-sides.}
\label{original3dent}
\end{figure}

One can view the unit triangles removed in the Propp's problem as triangular dent of size 1. In this paper, we consider a more general situation when  our hexagon has three triangular dents  of  \emph{arbitrary} sizes on three non-consecutive sides. Moreover, these triangular dents can be extended to bowtie-shaped dents consisting of  two adjacent triangles as follows.

Assume that  $a,b,c,d,e,f,x,y,z$ are 9 non-negative integers. We consider the hexagon of side-lengths $z+x+a+b+c$, $x+y+d+e+f,$  $y+z+a+b+c,$ $z+x+d+e+f,$ $x+y+a+b+c,$ $y+z+d+e+f$. Next, we remove three bowties along the northeast, south and northwest sides of the hexagon at the locations specified as in Figure \ref{threedents} (for the case of $a=2,b=3,c=2,d=3,e=2,f=2,x=2,y=1,z=2$). We denote by $F\begin{pmatrix}x&y&z\\a&b&c\\d&e&f\end{pmatrix}$ the resulting region.

We call the common vertex of two triangles in a bowtie the \emph{center} of the bowtie. We notice that the centers of  the three bowtie-shaped dents in our $F$-type region are always the vertices of a down-pointing equilateral triangle of side-length  $x+y+z+d+e+f$ (indicated by the dotted triangle in Figure \ref{threedents}).

\begin{figure}\centering
\includegraphics[width=8cm]{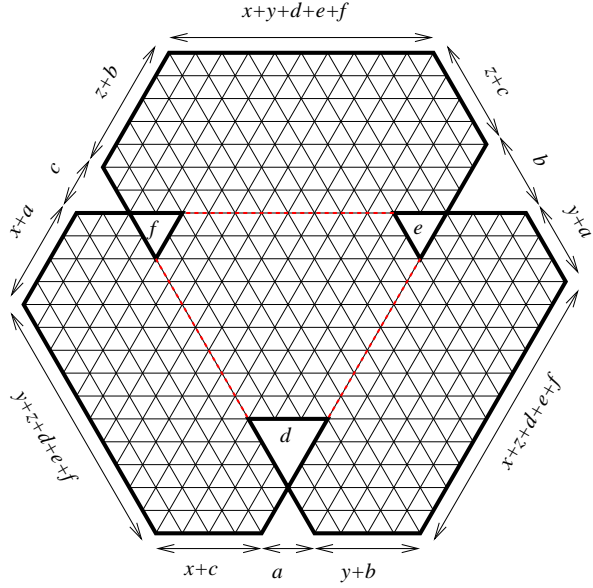}
\caption{A hexagon with three bowtie-shaped dents on three non-consecutive sides.}
\label{threedents}
\end{figure}

\begin{thm}\label{main} For non-negative integers $a,$ $b,$ $c,$ $d,$ $e,$ $f,$ $x,$ $y,$ $z$, the number of lozenge tilings of the region $F\begin{pmatrix}x&y&z\\a&b&c\\d&e&f\end{pmatrix}$ is equal to
\small{\begin{align}\label{maineq}
 &\frac{\Hf(x)\Hf(y)\Hf(z)\Hf(a)^2\Hf(b)^2\Hf(c)^2\Hf(d)\Hf(e)\Hf(f)\Hf(d+e+f+x+y+z)^4}{\Hf(a+d)\Hf(b+e)\Hf(c+f)\Hf(d+e+x+y+z)\Hf(e+f+x+y+z)\Hf(f+d+x+y+z)}\notag\\
 &\times \frac{\Hf(A+2x+2y+2z)\Hf(A+x+y+z)^2}{\Hf(A+2x+y+z)\Hf(A+x+2y+z)\Hf(A+x+y+2z)}\notag\\
  &\times \frac{\Hf(a+b+d+e+x+y+z)\Hf(a+c+d+f+x+y+z)\Hf(b+c+e+f+x+y+z)}{\Hf(a+d+e+f+x+y+z)^2\Hf(b+d+e+f+x+y+z)^2\Hf(c+d+e+f+x+y+z)^2}\notag\\
  &\times\frac{\Hf(a+d+x+y)\Hf(b+e+y+z)\Hf(c+f+z+x)}{\Hf(a+b+y)\Hf(b+c+z)\Hf(c+a+x)}\notag\\
  &\times \frac{\Hf(A-a+x+y+2z)\Hf(A-b+2x+y+z)\Hf(A-c+x+2y+z)}{\Hf(b+c+e+f+x+y+2z)\Hf(c+a+d+f+2x+y+z)\Hf(a+b+d+e+x+2y+z)},
\end{align}}
\normalsize where $A=a+b+c+d+e+f$.
\end{thm}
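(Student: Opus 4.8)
The plan is to recast the problem as a perfect-matching enumeration and then run an induction driven by Kuo's graphical condensation, in line with the stated keywords. First I would pass to the dual graph $G$ of the region $R:=F\begin{pmatrix}x&y&z\\a&b&c\\d&e&f\end{pmatrix}$: its vertices are the unit triangles of $R$, two being adjacent exactly when they share an edge. This $G$ is planar and bipartite (up- and down-pointing triangles give the two color classes), and lozenge tilings of $R$ correspond bijectively to perfect matchings of $G$, so the quantity in question is $\M(G)$; I will also write $\M(R)$ for this count. The engine is Kuo's condensation identity: if $u,v$ lie in one color class and $w,s$ in the other, and they occur in the cyclic order $u,w,v,s$ along the outer face, then
\[
\M(G)\,\M\big(G\setminus\{u,v,w,s\}\big)=\M\big(G\setminus\{u,w\}\big)\,\M\big(G\setminus\{v,s\}\big)+\M\big(G\setminus\{u,s\}\big)\,\M\big(G\setminus\{v,w\}\big).
\]

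\textbf{Choosing the four cells.} The decisive step is to pick the four special vertices $u,v,w,s$ as unit triangles on the boundary of the hexagon, placed at or just outside the tips of the three bowtie dents, so that each of the five reduced graphs occurring in Kuo's identity is—after deleting the lozenges that become \emph{forced} once a boundary cell is removed—again an $F$-type region with the nine parameters shifted by $\pm 1$ in a few coordinates. Concretely, I expect that removing a cell adjacent to a dent on one long side decrements one of $x,y,z$ (equivalently, resizes the corresponding bowtie) while leaving the remaining data intact, so that the identity collapses to a closed recurrence purely among tiling counts of $F$-regions, schematically
\[
\M(R)\,\M(R_{0})=\M(R_{1})\,\M(R_{2})+\M(R_{3})\,\M(R_{4}),
\]
where $R_{0},\dots,R_{4}$ are explicit $F$-regions with a few lowered parameters. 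The geometric bookkeeping of the forced lozenges—determining exactly which sub-hexagon-with-dents survives after each removal—is where care is needed, since the bowtie geometry makes the forced region nonobvious.

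\textbf{Verification and induction.} With such a recurrence in hand, I would check that the product on the right-hand side of \eqref{maineq} satisfies it. Writing the candidate formula as $P(a,\dots,z)$, this reduces, after canceling common hyperfactorials and using $\Hf(n+1)=n!\,\Hf(n)$, to a finite identity among ratios of ordinary factorials, which is routine though lengthy. The induction would proceed on a single nonnegative quantity that every term on the right strictly lowers, for example $x+y+z$ or the total $a+b+\dots+z$, and the base cases are the degenerate regions in which that quantity is minimal. These bases should collapse to already-known counts: when the bowties shrink away the region becomes a semiregular hexagon counted by MacMahon's formula \eqref{MacMahon}, and the size-$1$ specialization of the dents offers a consistency check against Eisenk\"olbl's evaluation of the three-triangle case.

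\textbf{Main obstacle.} The principal difficulty is the creative geometric step: with nine free parameters, a single choice of $u,v,w,s$ is unlikely to let one recurse in all of them at once while keeping every term inside the $F$-family. I anticipate needing several distinct applications of Kuo condensation—different quadruples of boundary cells, each recursing in a different group among $\{x,y,z\}$ and $\{a,\dots,f\}$—and then arranging them into one coherent multivariable induction whose base cases are genuinely simpler regions. Guaranteeing that each forced-lozenge reduction returns an $F$-region, rather than a region outside the family, is the crux; once that structural closure is secured, the hyperfactorial algebra and the base cases are mechanical.
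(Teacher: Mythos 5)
Your overall strategy --- pass to the dual graph, apply Kuo condensation to get a recurrence among $F$-type regions, and verify that the hyperfactorial product satisfies it --- is indeed the method of the paper (which proves the stronger $q$-weighted statement, Theorem \ref{qmain}, and reads off Theorem \ref{main} at $q=1$). The recurrence you predict schematically is essentially the paper's (\ref{3denteq1}), obtained by applying the \emph{unbalanced} form of Kuo condensation (Theorem \ref{kuothm2}) to the region $F$ augmented by a band of unit triangles along its southwest side; the resulting induction is on $y+z$ (note that your suggested induction quantity $x+y+z$ would not work for this recurrence, since $x$ increases in some of the terms).

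The genuine gap is in your treatment of the base cases. You assert that when the induction bottoms out ``the region becomes a semiregular hexagon counted by MacMahon's formula,'' with Eisenk\"olbl's three-unit-triangle case as a consistency check. This is not what happens: the recurrence that keeps all five reduced regions inside the $F$-family can only lower $y$ and $z$ (while raising $x$ and $e$), and at the floor $y=0$ or $z=0$ the region does not degenerate to a hexagon. Instead it splits, via the Region Splitting Lemma \ref{GS}, into a hexagon times a \emph{new} dented hexagon outside the $F$-family --- the $B$-type region of Figure \ref{dentbar}, a hexagon with a bowtie dent on one side and two triangular dents on the opposite side. That family requires its own product-formula enumeration (Theorem \ref{qdentbarthm}), proved by a separate Kuo-condensation induction whose base cases in turn split into hexagons times $Q$-type regions (hexagons with two aligned triangular dents, Theorems \ref{qpremain1} and \ref{qpremain2}), and those bottom out at the one-dent $K$-type regions of Lemma \ref{qlem1}, handled by the classical bijection with column-strict plane partitions. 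So the ``structural closure'' you correctly identify as the crux fails precisely where you hoped it would hold: closing the induction forces you out of the $F$-family and into a two-level cascade of auxiliary families, each needing its own explicit formula and its own condensation argument. Without identifying and proving those auxiliary enumerations, the induction cannot be completed, and this is the bulk of the actual work in the paper.
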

By letting $x=y=n$, $a=b=c=1$ and $d=e=f=0$ in Theorem \ref{main}, we obtain the  solution of the Propp's problem. Moreover,  the region  $F\begin{pmatrix}x&y&z\\0&0&0\\d&0&0\end{pmatrix}$ has several lozenges on the base that are forced to be in any tiling. By removing these forced  lozenges, we obtain a semi-regular hexagon $Hex(z+x,x+y+d,y+z)$ that has the same number of tilings as the original $F$-type region. Therefore, Theorem \ref{main} implies MacMahon's tiling formula (\ref{MacMahon}).

\begin{figure}\centering

\includegraphics[width=15cm]{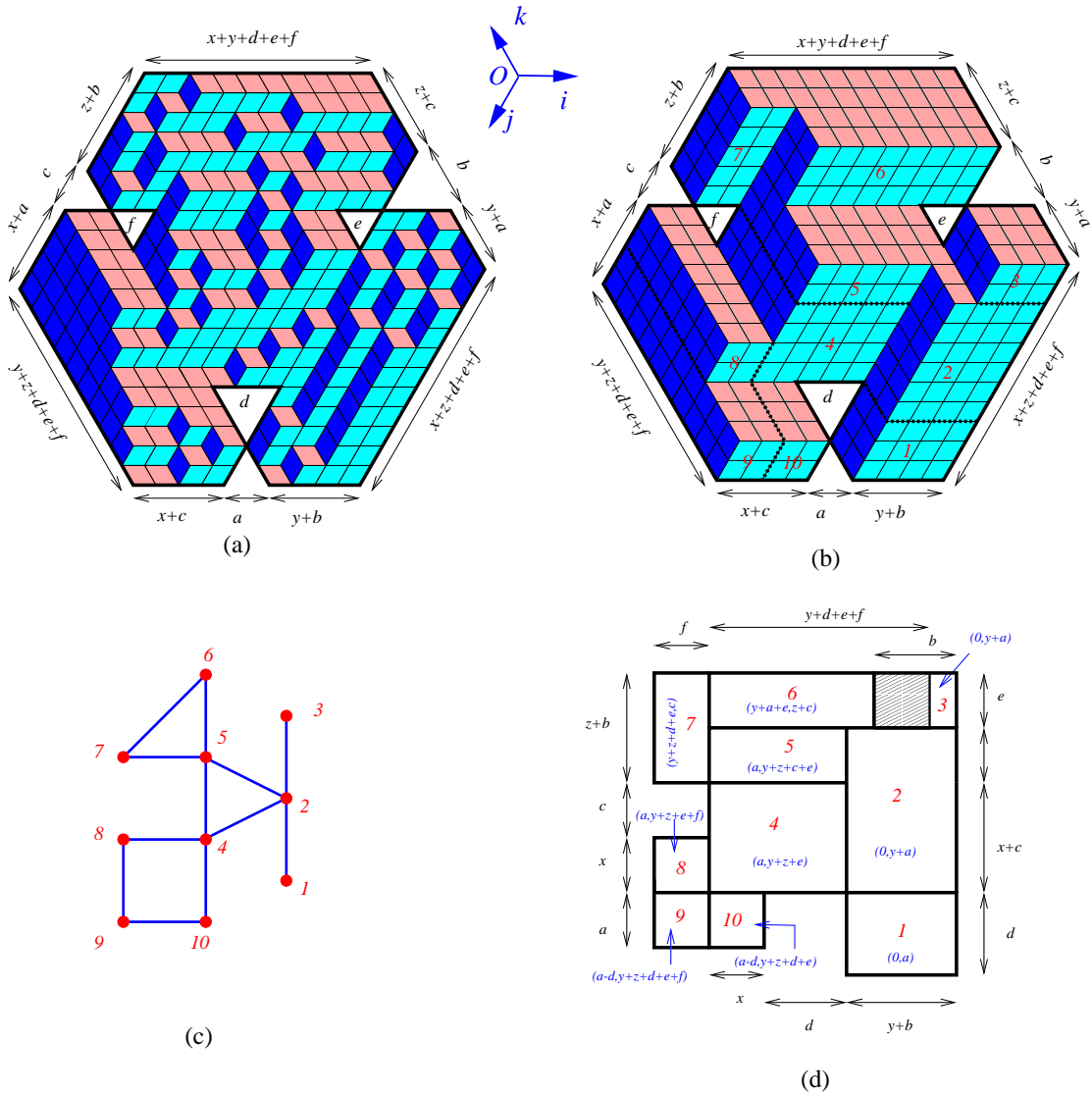}
\caption{(a) Viewing a tiling of a $F$-type region as a pile of cubes in fitting in a compound box $\mathcal{B}$. (b) The empty pile --- a 3-D picture of the compound box $\mathcal{B}$. (c) The connectivity of the component boxes $B_i$'s in $\mathcal{B}$. (d) The projection of the compound box $\mathcal{B}$ on the $\textbf{Oij}$ plane.}
\label{3dentcombine}
\end{figure}

\medskip

Next, we consider a $q$-analogue of Theorem \ref{main} as follows.

Similar to the case of semi-regular hexagons, the lozenge tilings of our $F$-type region can also be viewed as piles of unit cubes fitting in a \emph{compound box} $\mathcal{B}:=\mathcal{B}\begin{pmatrix}x&y&z\\a&b&c\\d&e&f\end{pmatrix}$ consisting of 10 non-overlapping component (rectangular) boxes $B_1,B_2,\dotsc,B_{10}$ (see Figure \ref{3dentcombine}(a)). More precise, Figure \ref{3dentcombine}(b) gives a 3-D picture of the compound box $\mathcal{B}$ by showing the empty pile; the bases of the component boxes $B_i$'s are labelled by $1,2,\dotsc,10$. If two component boxes $B_i$ and $B_j$ are adjacent, we remove the common portion of their faces to make them connected.  The connectivity of the component boxes in $\mathcal{B}$ is illustrated by the graph in Figure \ref{3dentcombine}(c): the vertices are $B_i$'s, and the edges connect precisely two adjacent component boxes. One more way to determine precisely the structure of the compound box $\mathcal{B}$ is to project it on the $\textbf{Oij}$ plane (see Figure \ref{3dentcombine}(d)). In this projection, each component box $B_i$ is represented by the rectangle $i$ associated with a pair of integers $(s,t)$, where $s$ is the level of the base of the box and $t$ is its height. We notice that the rectangles corresponding to $B_3$ and $B_6$ are overlapped (shown by the shaded area in Figure \ref{3dentcombine}(d)), but the boxes themselves are not overlapped (the base of $B_6$ is above the top of $B_3$).

 We call the latter piles  of unit cubes  \emph{generalized plane partitions} (or GPPs), since they satisfy the same monotonicity as the ordinary plane partitions: the tops of the columns (shown as right lozenges in Figure \ref{3dentcombine}(a)) are weakly decreasing along $\overrightarrow{\textbf{Oi}}$ and $\overrightarrow{\textbf{Oj}}$. We also use the notation $|\pi|$ for the volume of the GPP $\pi$.

Similar to MacMahon's classical theorem, we have a simple product formula for the generating function of (the volume of) the GPPs.
\begin{thm}\label{qmain} For non-negative integers $a,$ $b,$ $c,$ $d,$ $e,$ $f,$ $x,$ $y,$ $z$
\small{\begin{align}\label{maineqbox}
& \sum_{\pi}q^{|\pi|}=\notag\\
 &=\frac{\Hf_q(x)\Hf_q(y)\Hf_q(z)\Hf_q(a)^2\Hf_q(b)^2\Hf_q(c)^2\Hf_q(d)\Hf_q(e)\Hf_q(f)\Hf_q(d+e+f+x+y+z)^4}{\Hf_q(a+d)\Hf_q(b+e)\Hf_q(c+f)\Hf_q(d+e+x+y+z)\Hf_q(e+f+x+y+z)\Hf_q(f+d+x+y+z)}\notag\\
 &\times \frac{\Hf_q(A+2x+2y+2z)\Hf_q(A+x+y+z)^2}{\Hf_q(A+2x+y+z)\Hf_q(A+x+2y+z)\Hf_q(A+x+y+2z)}\notag\\
  &\times \frac{\Hf_q(a+b+d+e+x+y+z)\Hf_q(a+c+d+f+x+y+z)\Hf_q(b+c+e+f+x+y+z)}{\Hf_q(a+d+e+f+x+y+z)^2\Hf_q(b+d+e+f+x+y+z)^2\Hf_q(c+d+e+f+x+y+z)^2}\notag\\
  &\times\frac{\Hf_q(a+d+x+y)\Hf_q(b+e+y+z)\Hf_q(c+f+z+x)}{\Hf_q(a+b+y)\Hf_q(b+c+z)\Hf_q(c+a+x)}\notag\\
  &\times \frac{\Hf_q(A-a+x+y+2z)\Hf_q(A-b+2x+y+z)\Hf_q(A-c+x+2y+z)}{\Hf_q(b+c+e+f+x+y+2z)\Hf_q(c+a+d+f+2x+y+z)\Hf_q(a+b+d+e+x+2y+z)},
\end{align}}
\normalsize where the sum on the left-hand side is taken over all GPPs $\pi$ fitting in the  compound box $\mathcal{B}\begin{pmatrix}x&y&z\\a&b&c\\d&e&f\end{pmatrix}$, and where $A=a+b+c+d+e+f$.
\end{thm}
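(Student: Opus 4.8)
The plan is to interpret the left-hand side of \eqref{maineqbox} as a weighted perfect-matching generating function on the planar dual graph $G$ of the region $F\begin{pmatrix}x&y&z\\a&b&c\\d&e&f\end{pmatrix}$, and then to transport the graphical-condensation proof of Theorem \ref{main} to this weighted setting. I would assign to each lozenge a monomial weight in $q$ so that, under the tiling-to-GPP correspondence of Figure \ref{3dentcombine}(a), the product of the weights over a tiling $T$ equals $q^{|\pi(T)|}$ (possibly after dividing by a fixed power of $q$ independent of $T$). The natural assignment gives each right lozenge---i.e.\ each top face of a column in the pile---a weight recording the height of the column it caps, while the other two orientations receive weight $1$; adding one unit cube to a column then multiplies the tiling weight by exactly $q$, so that $\sum_{\pi}q^{|\pi|}=\M_{\wt}(G)$, the weighted matching generating function of $G$.

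The key structural point is that Kuo's graphical condensation is weight-agnostic: for a planar bipartite graph with four suitably colored vertices $u_1,u_2,u_3,u_4$ on a face, the identity
\[
\M_{\wt}(G)\,\M_{\wt}(G-\{u_1,u_2,u_3,u_4\})=\M_{\wt}(G-\{u_1,u_2\})\,\M_{\wt}(G-\{u_3,u_4\})+\M_{\wt}(G-\{u_1,u_4\})\,\M_{\wt}(G-\{u_2,u_3\})
\]
holds for arbitrary edge weights. Thus the very same condensation scheme that establishes Theorem \ref{main} at $q=1$ yields a recurrence for $\M_{\wt}(G)$, provided the deleted vertices are chosen on the boundary so that each graph $G-\{\cdots\}$ is, after removing forced lozenges, the dual graph of an $F$-type region with shifted parameters $x,y,z,a,b,c,d,e,f$. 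Those shifted regions are handled by the induction hypothesis, and their weighted counts are given by \eqref{maineqbox} with the corresponding arguments.

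With the recurrence in hand, the proof reduces to two checks. First, one must pin down the exact power of $q$ contributed by the forced and boundary lozenges in each condensation move; at $q=1$ these powers are invisible, but here they determine the monomial prefactors that make the recurrence balance. Second, one must verify that the product of $q$-hyperfactorials on the right-hand side of \eqref{maineqbox} satisfies the resulting $q$-deformed recurrence; after cancelling common factors this becomes a polynomial identity among $q$-integers $[n]_q$, the $q$-analogue of the hyperfactorial identity underlying Theorem \ref{main}. I expect this $q$-identity, together with the precise $q$-power bookkeeping in each Kuo move, to be the main obstacle, since the $q$-integer relations do not collapse as cleanly as their $q=1$ specializations. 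Finally, I would dispose of the base cases of the induction---degenerate parameter values where the region reduces to a single box or forces a unique tiling---matching \eqref{maineqbox} against MacMahon's $q$-formula \eqref{qMacMahon} or a trivial evaluation.
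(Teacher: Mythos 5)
Your high-level strategy (a $q$-weighted Kuo condensation plus an induction in which the product formula is checked against the recurrence) is indeed the paper's strategy, but there are two concrete gaps. First, the weight you propose --- giving each right lozenge the weight $q^{h}$ where $h$ is the height of the column of cubes it caps --- is \emph{tiling-dependent}: the same lozenge position receives different weights in different tilings, so this is not an edge-weighting of the dual graph $G$, and $\M_{\wt}(G)$ is not defined; Kuo's identity only applies to fixed edge weights. Your parenthetical ``after dividing by a fixed power of $q$'' does not repair this. The paper instead uses the position-dependent assignments $wt_1$ and $wt_2$ (e.g.\ weight $q^{k}$ where $k$ is the distance from the lozenge's top to the base of the region) and proves as a separate statement (Proposition \ref{ratioprop}, via the lozenge-path encodings of the ten sub-piles) that $\M_2(F)=q^{\mathbf{h}}\sum_\pi q^{|\pi|}$ with $\mathbf{h}$ independent of the tiling. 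That proposition is a genuine lemma, not bookkeeping, and it is what licenses running condensation at all.

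Second, and more seriously, the base cases are nowhere near ``a single box or a trivial evaluation.'' The induction is on $y+z$, and when $y=0$ or $z=0$ the $F$-region does not degenerate to a hexagon: by the Region Splitting Lemma it factors into a hexagon times a $B$-type region (a hexagon with a bowtie dent and two triangular dents), whose $q$-enumeration (Theorem \ref{qdentbarthm}) is itself new and is proved by a second Kuo induction; its base cases in turn require the two-dent $Q$-type regions (Theorems \ref{qpremain1} and \ref{qpremain2}, a third Kuo induction, including a reflected weight $\M_3$ that must be related to $\M_2$ with $q\mapsto q^{-1}$), which finally bottom out at the known $K$-type formula (Lemma \ref{qlem1}). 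This cascade of auxiliary families occupies most of the paper and cannot be replaced by an appeal to MacMahon's formula \eqref{qMacMahon}. A smaller but real omission: the main induction actually uses Kuo's unbalanced identity (Theorem \ref{kuothm2}, three deleted vertices in $V_1$ and one in $V_2$) applied to the region augmented by a band of $2(z+d+e+f)-1$ unit triangles, not the balanced four-vertex identity you quote; arranging the four deleted triangles so that every term of the recurrence is again an $F$-type region with only $x,y,z,e$ shifted is precisely the nontrivial geometric step.
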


\medskip

The goal of this paper is to prove Theorem \ref{qmain} by using Kuo's graphical condensation method, or simply \emph{Kuo condensation}. The method was introduced by Eric H. Kuo \cite{Kuo04} in his elegant proof of the well-known Aztec diamond theorem by Elkies, Kuperberg, Larsen and Propp \cite{Elkies, Elkies2}. Kuo condensation has become a powerful tool in enumeration of tilings. We refer the reader to e.g. \cite{YYZ,YZ,Kuo06,speyer,Ciucu3,Ful} for various aspects and generalizations of Kuo condensation; and e.g. \cite{CK2,CL,CF14,CF15,KW,Lai15a,Lai15b,Lai15c,Tri,LMNT,Zhang} for recent applications of the method.

\medskip

The rest of the paper is organized as follows. In Section 2, we quote several fundamental results, which will be employed in our proofs. Section 3 is devoted to the $q$-enumerations of lozenge tilings of two new families of dented hexagons. These $q$-enumerations are the key of the proof of Theorem \ref{qmain} in Section 4. We will show a consequence of Theorem \ref{main} in enumeration of (ordinary) plane partitions with certain constrains in Section 5. Finally, we conclude the paper by giving serval remarks.

\section{Preliminaries}

A \emph{perfect matching} of a graph $G$ is a collection of disjoint edges covering all vertices of $G$. The \emph{dual graph} of a region $R$ is the graph whose vertices are unit triangles in $R$ and whose edges connect precisely two unit triangles sharing an edge. Each edge of the dual graph carries the same weight as the corresponding lozenge in the region. The tilings of a region can be identified with the perfect matchings of its dual graph. The weight of a perfect matching is the product of the weights of its edges. The sum of the weights of  all perfect matchings in $G$ is called the \emph{matching generating function} of $G$, and denoted by $\M(G)$. We define similarly the \emph{tiling generating function} $\M(R)$ of the weighted region  $R$.

A \emph{forced lozenge} in a region $R$  is a lozenge contained in any tiling of $R$. Assume that we remove several forced lozenges $l_1,l_2\dotsc,l_n$ from the region $R$, and denote by $R'$  the resulting region. Then one readily obtains that
\begin{equation}\label{forcedeq}
\M(R)=\M(R')\prod_{i=1}^{n}wt(l_i),
\end{equation}
where $wt(l_i)$ is the weight of the lozenge $l_i$. The equality (\ref{forcedeq}) will be employed often in our proofs.

It is easy to see that if a region $R$ admits a tiling, then the numbers of up-pointing and down-pointing unit triangles in $R$ are equal. If a region $R$ satisfies the above balancing condition, we say that $R$ is \emph{balanced}. We have the following  generalization of (\ref{forcedeq}):

\begin{lem}[Region Splitting Lemma]\label{GS}
Let $R$ be a balanced region. Assume that a sub-region $Q$ of $R$ satisfies the following two conditions:
\begin{enumerate}
\item[(i)] \text{\rm{(Separating Condition)}} There is only one type of unit triangles (up-pointing or down-pointing unit triangles) running along each side of the border between $Q$ and $R-Q$.

\item[(ii)] \text{\rm{(Balancing Condition)}} $Q$ is balanced.
\end{enumerate}
Then
\begin{equation}\label{GSeq}
\M(R)=\M(Q)\, \M(R-Q).
\end{equation}
\end{lem}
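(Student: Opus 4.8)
The plan is to show that the two hypotheses force every lozenge tiling of $R$ to respect the cut between $Q$ and $R-Q$, so that tilings of $R$ are in weight-preserving bijection with pairs consisting of a tiling of $Q$ and a tiling of $R-Q$; the product formula (\ref{GSeq}) then follows at once, since the weight of a tiling is the product of the weights of its lozenges. This is the standard ``no tile crosses the cut'' argument.

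First I would fix an arbitrary tiling $T$ of $R$ and call a lozenge of $T$ a \emph{crossing} lozenge if one of its two unit triangles lies in $Q$ and the other lies in $R-Q$; equivalently, the edge shared by the two triangles of such a lozenge lies on the border between $Q$ and $R-Q$. The heart of the argument is to prove that $T$ has no crossing lozenge. To this end I would use that the two unit triangles of any lozenge have opposite orientations (one up-pointing and one down-pointing). By the Separating Condition, all unit triangles of $Q$ that are adjacent to the border carry a single orientation; after possibly interchanging the two orientations, assume they are all up-pointing. Then the unique triangle of $Q$ covered by a crossing lozenge is always up-pointing.

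Next I would compare, inside $Q$, the numbers of up-pointing and down-pointing triangles covered by the two kinds of lozenges of $T$. Each lozenge of $T$ lying entirely in $Q$ covers exactly one up-pointing and one down-pointing triangle of $Q$, hence contributes equally to both counts; say there are $m$ such lozenges. Each crossing lozenge covers exactly one up-pointing triangle of $Q$ and no down-pointing one; say there are $k$ of them. Since $T$ covers every triangle of $Q$ exactly once, the number of up-pointing triangles of $Q$ equals $m+k$, while the number of down-pointing ones equals $m$. The Balancing Condition asserts that these two counts are equal, which forces $k=0$; hence $T$ has no crossing lozenge. This counting step, which is precisely where both hypotheses enter, is the main point of the proof, and everything else is bookkeeping.

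Finally, once no lozenge of $T$ is crossing, every lozenge of $T$ lies entirely in $Q$ or entirely in $R-Q$, so $T$ splits into a tiling $T_Q$ of $Q$ and a tiling $T_{R-Q}$ of $R-Q$, with $\wt(T)=\wt(T_Q)\,\wt(T_{R-Q})$ because each edge retains its weight. Conversely, since $Q$ and $R-Q$ partition $R$, the superimposition of any tiling of $Q$ and any tiling of $R-Q$ is a tiling of $R$. Thus $T\mapsto(T_Q,T_{R-Q})$ is a weight-preserving bijection between the tilings of $R$ and the pairs of tilings of $Q$ and of $R-Q$, and summing the factored weights over all such pairs yields $\M(R)=\M(Q)\,\M(R-Q)$, which is (\ref{GSeq}).
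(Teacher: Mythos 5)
Your proof is correct, but it takes a different route from the paper. The paper disposes of this lemma in one line: it passes to the dual graphs $G$ of $R$ and $H$ of $Q$, observes that $H$ satisfies the hypotheses of the graph-splitting lemma proved as Lemma 3.6(a) in the author's earlier paper \cite{Tri}, and concludes $\M(G)=\M(H)\,\M(G-H)$. You instead give a self-contained argument entirely in the tiling language, in effect reproving that cited graph lemma: the counting step (every lozenge internal to $Q$ covers one triangle of each orientation, every crossing lozenge covers one up-pointing triangle of $Q$ and no down-pointing one, so balance of $Q$ forces the number of crossing lozenges to be zero) is exactly the combinatorial core that the paper outsources. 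What your version buys is transparency and independence from the external reference; what the paper's version buys is brevity and reuse of an already-established, more general bipartite-graph statement. One small point of care: your argument uses the Separating Condition in its global form, i.e.\ that \emph{all} unit triangles of $Q$ adjacent to the border have the same orientation, whereas the lemma is phrased ``along each side of the border.'' If different sides of the cut carried different orientations on the $Q$ side, your count would only give that the numbers of the two kinds of crossing lozenges agree, not that both vanish. This does not affect the paper (in every application the cut is a single segment, so the two readings coincide), but it is worth stating explicitly that you are taking the orientation to be uniform over the whole border.
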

\begin{proof}
Let $G$ be the dual graph of $R$, and $H$ the dual graph of $Q$. Then $H$ satisfies the conditions  in \cite[Lemma 3.6(a)]{Tri}, so $\M(G)=\M(H)\, \M(G-H)$.
Then (\ref{GSeq}) follows.
\end{proof}

The following Kuo's theorems are the key of our proofs.
\begin{thm}[Theorem 5.1 in \cite{Kuo04}]\label{kuothm1}
Let $G=(V_1,V_2,E)$ be a (weighted) bipartite planar graph in which $|V_1|=|V_2|$. Assume that  $u, v, w, s$ are four vertices appearing in a cyclic order on a face of $G$ so that $u,w \in V_1$ and $v,s \in V_2$. Then
\begin{equation}\label{kuoeq1}
\M(G)\M(G-\{u, v, w, s\})=\M(G-\{u, v\})\M(G-\{ w, s\})+\M(G-\{u, s\})\M(G-\{v, w\}).
\end{equation}
\end{thm}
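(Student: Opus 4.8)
The plan is to prove the identity by the superposition (overlay) method that underlies all of Kuo's condensation results, reading each of the three products as a weighted sum over ordered pairs of perfect matchings. Concretely, I would interpret $\M(G-\{u,v\})\M(G-\{w,s\})$ as the weighted count of pairs $(M_1,M_2)$ with $M_1$ a perfect matching of $G-\{u,v\}$ and $M_2$ one of $G-\{w,s\}$, and analogously for the other two products. The first step is to form, for each such pair, the superposition $M_1\cup M_2$ as a multigraph on $V_1\cup V_2$ and to record its weight $\wt(M_1)\wt(M_2)$, which equals the product of the weights of the edges appearing in the superposition.

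Next I would analyze the combinatorial structure of a superposition. Every vertex not among $u,v,w,s$ is covered once by each matching, so it has degree $2$ (a doubled edge when the two matchings agree, otherwise two distinct edges), while each of $u,v,w,s$ is covered by exactly one of the two matchings and hence has degree $1$. Thus the superposition decomposes into doubled edges, alternating cycles, and exactly two alternating paths whose four endpoints are $u,v,w,s$. I would then pin down how these endpoints are paired by the two paths using three constraints: the bipartite two-coloring (an alternating path has endpoints of the same color iff it has even length), the record of which matching covers each special vertex (which fixes the type of the terminal edge at each endpoint), and—crucially—planarity. Since $u,v,w,s$ lie in this cyclic order on a single face, the two paths are non-crossing, so the only admissible pairings are $\{u\text{-}v,\,w\text{-}s\}$ (call it Type I) and $\{u\text{-}s,\,v\text{-}w\}$ (Type II); the crossing pairing $\{u\text{-}w,\,v\text{-}s\}$ is geometrically impossible. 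A short parity check then shows that superpositions arising from $\M(G-\{u,v\})\M(G-\{w,s\})$ must be Type I, those from $\M(G-\{u,s\})\M(G-\{v,w\})$ must be Type II, while those from $\M(G)\M(G-\{u,v,w,s\})$ may be of either type.

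With this classification in hand, the identity follows from two weight-preserving bijections obtained by re-splitting a superposition along its two paths. Given a pair counted by $\M(G)\M(G-\{u,v,w,s\})$ whose superposition is Type I, I would leave the edges on the doubled edges and cycles assigned as they are and swap, along each of the two paths, which matching owns the terminal edges; this converts the pair into one counted by $\M(G-\{u,v\})\M(G-\{w,s\})$ with the same underlying edge multiset, hence the same weight, and the operation is manifestly invertible. The analogous re-splitting sends Type II superpositions bijectively to pairs counted by $\M(G-\{u,s\})\M(G-\{v,w\})$. Summing over the two types then yields $\M(G)\M(G-\{u,v,w,s\})=\M(G-\{u,v\})\M(G-\{w,s\})+\M(G-\{u,s\})\M(G-\{v,w\})$.

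I expect the main obstacle to be the endpoint-pairing analysis rather than the bookkeeping: one must combine the bipartite parity argument with planarity correctly, both to exclude the crossing pairing and to prove that each product lands in exactly the predicted type(s), and then verify that re-splitting along the paths really produces matchings missing precisely the prescribed vertex pairs (in particular that the interior vertices of each path, together with $w,s$ on one path and $u,v$ on the other, remain covered). Once these structural facts are secured, the weight-preservation and invertibility of the bijections are immediate.
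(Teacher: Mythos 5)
This statement is not proved in the paper at all: it is imported verbatim as Theorem 5.1 of Kuo's paper \cite{Kuo04}, so there is no in-paper argument to compare against. What you have written is essentially a reconstruction of Kuo's original superposition proof, and almost all of it is sound: the degree count showing the overlay splits into doubled edges, cycles, and exactly two alternating paths with endpoints $u,v,w,s$; the classification of the admissible endpoint pairings via the bipartite parity of path lengths combined with which matching supplies each terminal edge; and the use of planarity (the cyclic order of $u,v,w,s$ on a face) to kill the crossing pairing $\{u\text{-}w,\,v\text{-}s\}$, which parity alone does \emph{not} exclude for the products on the right-hand side. Your conclusion that the left-hand product yields both types while each right-hand product yields exactly one type is correct.

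The one step that, as literally stated, would fail is the re-splitting: you say to ``swap, along \emph{each} of the two paths, which matching owns the terminal edges.'' Start from a pair $(M_1,M_2)$ with $M_1$ a perfect matching of $G$ and $M_2$ of $G-\{u,v,w,s\}$, of Type I with paths $P_{uv}$ and $P_{ws}$; all four terminal edges lie in $M_1$. If you exchange edge ownership along \emph{both} paths, the new first matching misses all four of $u,v,w,s$ and the new second matching covers all of them, so you land back in a pair counted by $\M(G-\{u,v,w,s\})\,\M(G)$ --- the same product you started from --- rather than in $\M(G-\{u,v\})\,\M(G-\{w,s\})$. The correct operation is to exchange ownership along exactly \emph{one} path, namely $P_{uv}$: this strips $u$ and $v$ from the first matching (which retains $w,s$) and grafts them onto the second, producing a perfect matching of $G-\{u,v\}$ paired with one of $G-\{w,s\}$; the inverse is the same swap along $P_{uv}$, and weights are preserved since the edge multiset is unchanged. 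With that single-path correction (and its analogue along $P_{us}$ for Type II), your argument goes through and is exactly the standard proof.
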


\begin{thm}[Theorem 5.3 in \cite{Kuo04}]\label{kuothm2}
Let $G=(V_1,V_2,E)$ be a (weighted) bipartite planar graph in which $|V_1|=|V_2|+1$. Assume that  $u, v, w, s$ are four vertices appearing in a cyclic order on a face of $G$ so that $u,$ $v,$ $w \in V_1$ and $s \in V_2$. Then
\begin{equation}\label{kuoeq2}
\M(G-\{v\})\M(G-\{u, w, s\})=\M(G-\{u\})\M(G-\{v, w, s\})+\M(G-\{w\})\M(G-\{v, w,s\}).
\end{equation}
\end{thm}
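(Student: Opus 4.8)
The plan is to prove the identity by the superposition-of-matchings technique. Both sides are sums of products of two matching generating functions, so I would read each product $\M(G-X)\,\M(G-Y)$ as the weighted count of ordered pairs $(M_1,M_2)$, where $M_1$ is a perfect matching of $G-X$ and $M_2$ a perfect matching of $G-Y$, the pair carrying weight $\wt(M_1)\wt(M_2)$. The first step is to analyze the superposition $H=M_1\cup M_2$ as a multiset of edges. For the left-hand side, $M_1$ covers every vertex except $v$ and $M_2$ covers every vertex except $u,w,s$, so in $H$ each of the four marked vertices $u,v,w,s$ has degree $1$ while every other vertex has degree $2$ (the hypothesis $|V_1|=|V_2|+1$ guarantees that all the deleted subgraphs are balanced, so the matchings exist and the degree count is as stated). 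Hence $H$ decomposes into doubled edges, alternating even cycles, and exactly two vertex-disjoint open paths whose four endpoints are $u,v,w,s$ paired up in some way. The same analysis applies to each product on the right-hand side, with the roles of the two matchings adjusted according to which vertex is deleted where.

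The second step, and the crux of the argument, is to invoke planarity to exclude one of the three possible pairings. Since $u,v,w,s$ appear in this cyclic order on a single face of the planar graph $G$, they lie on the boundary of a disk, and two vertex-disjoint paths drawn inside a disk cannot cross. The pairing $\{u,w\},\{v,s\}$ joins the two \emph{opposite} boundary points and is therefore the crossing pairing, which is impossible; only the two non-crossing pairings $\{u,v\},\{w,s\}$ and $\{u,s\},\{v,w\}$ can occur. I would then do a short parity bookkeeping to see which surviving pairings actually arise in each product: at each marked endpoint I record which of the two matchings supplies the incident edge, and I use that in a bipartite graph a path joining two vertices of the same class has even length (so its two end-edges lie in opposite matchings), while a path joining vertices of different classes has odd length (end-edges in the same matching). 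This shows that the left-hand product $\M(G-\{v\})\M(G-\{u,w,s\})$ realizes \emph{both} non-crossing pairings, the first right-hand product realizes only $\{u,v\},\{w,s\}$, and the second realizes only $\{u,s\},\{v,w\}$.

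The final step is the weight-preserving bijection. Fix a superposition $H$ of one surviving pairing type together with a choice of alternating phase on each of its even cycles; then the doubled edges and the phase along each of the two paths are forced by the prescribed endpoint conditions. The total edge weight $\prod_{e}\wt(e)$ (doubled edges counted twice) equals $\wt(M_1)\wt(M_2)$ and is independent of how $H$ is split into an ordered pair of matchings. Consequently a skeleton of type $\{u,v\},\{w,s\}$ contributes the same weight, with the same number $2^{k}$ of cycle-phasings, to the left-hand product and to the first right-hand product, and a skeleton of type $\{u,s\},\{v,w\}$ contributes equally to the left-hand product and to the second right-hand product. Summing over all skeletons gives $\text{(type-}\{u,v\}\{w,s\}\text{ total)}+\text{(type-}\{u,s\}\{v,w\}\text{ total)}$ on the left and exactly these two totals, one per term, on the right, which is the asserted identity.

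I expect the genuinely delicate point to be the planarity step that rules out the crossing pairing; once the two paths are known not to cross, the parity bookkeeping and the re-phasing bijection are routine. As an alternative route one could Kasteleyn-orient $G$, encode the quantities $\M(G-X)$ as maximal minors of the resulting rectangular sign matrix, and then derive the identity from a three-term Grassmann--Pl\"{u}cker relation for maximal minors; this trades the planar topology for a purely algebraic relation but requires first setting up the sign conventions that convert signed minors into (unsigned) matching generating functions.
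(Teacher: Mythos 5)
Your superposition argument is correct and is essentially Kuo's own proof of this result; the paper itself offers no proof, quoting the statement directly from \cite{Kuo04}, so there is nothing different to compare against. One point worth flagging: your endpoint/parity bookkeeping (the pairing $\{u,s\},\{v,w\}$ forces the second product to be $\M(G-\{w\})\M(G-\{u,v,s\})$) actually exposes a typo in the statement as transcribed here, whose last factor reads $\M(G-\{v,w,s\})$ instead of the correct $\M(G-\{u,v,s\})$; the corrected form is what Kuo's Theorem 5.3 asserts and what the paper in fact uses in its later application.
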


The GPPs fitting in the compound box $\mathcal{B}=\mathcal{B}\begin{pmatrix}x&y&z\\a&b&c\\d&e&f\end{pmatrix}$ yields a natural $q$-weight assignment on the tilings of the region $F=F\begin{pmatrix}x&y&z\\a&b&c\\d&e&f\end{pmatrix}$ as follows. View each lozenge tiling $T$ of $F$ as a pile of unit cubes $\pi_T$ (i.e. a GPP). Each right lozenge is now the top of a column of unit cubes in $\pi_T$. Assign to each right lozenge in $T$ a weight $q^x$, where $x$ is the number of unit cubes in the corresponding column. In particular, the weight of $T$ is exactly $q^{|\pi_T|}$. We denote by $wt_0$ this weight assignment (see Figure \ref{3dentweight}(a)).

\begin{figure}\centering
\includegraphics[width=6cm]{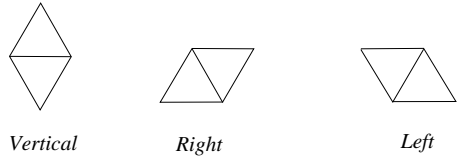}
\caption{Three orientations of lozenges.}
\label{rhumbustype}
\end{figure}

\begin{figure}\centering
\includegraphics[width=14cm]{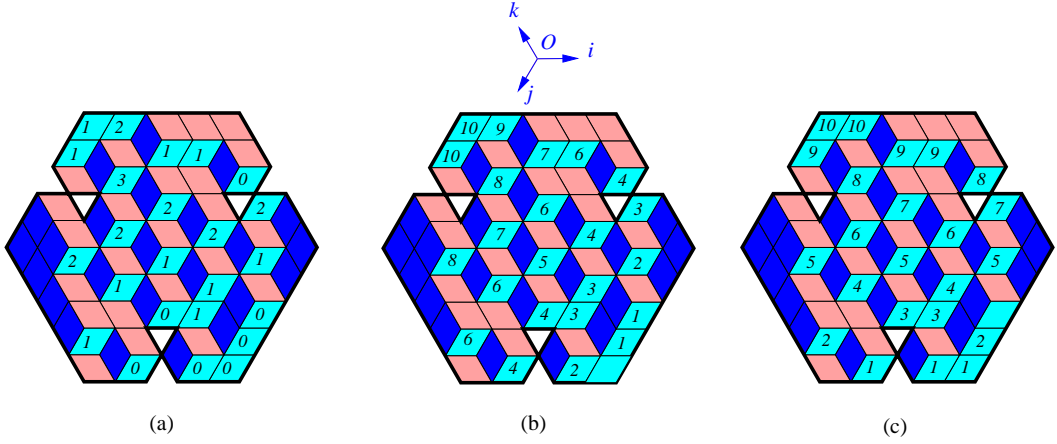}
\caption{Three $q$-weight assignments of a lozenges in a sample tiling of  a $F$-type region: (a) $wt_0$, (b) $wt_1$, (c) $wt_2$. The right lozenges with label $l$ have weight $q^l$.}
\label{3dentweight}
\end{figure}

Besides the natural $q$-weight assignment,  we consider the following two $q$-weight assignments. In both weight assignments, all left and vertical lozenges are weighted by $1$. The right lozenges are weighted as below:
\begin{enumerate}
\item[(1)] \emph{Assignment 1}. Each right lozenge is weighted by $q^{l}$, where $l$ is the distance between its left side and the southeast side of the region.
\item[(2)] \emph{Assignment 2}. Each right lozenge is weighted by $q^{k}$, where $k$ is the distance between its top and the base of the region.
\end{enumerate}
We denote by $wt_1$ and $wt_2$ the above weight assignments. Figure \ref{3dentweight} shows the three $q$-weight assignments for a sample tiling of the region $F\begin{pmatrix}1&1&1\\1&1&1\\1&1&1\end{pmatrix}$. Unlike $wt_0$, the weight assignments $wt_1$ and $wt_2$ are independent from the choice of the tiling $T$ of $F$. Thus, $wt_1$ and $wt_2$ can be applied to any hexagon with defects. 
Hereafter, we use the notation $wt_i(T)$ and $M_i(F)$ for the weight of the tiling $T$ and the tiling generating function of $F$ corresponding to the weight assignment $wt_i$, for $i=0,1,2$.

In the next part of this section, we will show that the two assignments $wt_1$ and $wt_2$ are the same up to a multiplicative factor.

We define two $9$-variable functions from $\mathbb{Z}_{\geq 0}^9$ to $\mathbb{Z}_{\geq0}$ as follows:
\small{\begin{align}
\textbf{g}&=\textbf{g}\begin{pmatrix}x&y&z\\a&b&c\\d&e&f\end{pmatrix}\notag\\
&:=(x+z+d+f)\binom{y+b+1}{2}+e\binom{b+1}{2}+a(c+x)(a+b+y)+a\binom{x+c+1}{2}\notag\\
&+(x+d)(a+b+y)(f+x+z)+(x+z+f)\binom{d+x+1}{2}+b(d+e+x+y)(a+b+y)\notag\\
&+b\binom{x+y+d+e+1}{2}+f(z+b)(a+b+d+e+x+2y+z)+(z+b)\binom{f+1}{2}\notag\\
&+xc(a+b+d+x+y)+x\binom{c+1}{2}
\end{align}}
\normalsize and
\small{\begin{align}
\textbf{h}&=\textbf{h}\begin{pmatrix}x&y&z\\a&b&c\\d&e&f\end{pmatrix}\notag\\
&:=b\binom{x+z+d+e+f+1}{2}+y\binom{x+z+d+f+1}{2}+(x+c)\binom{a+1}{2}+xc(a+d)+c\binom{x+1}{2}\notag\\
&+(x+d)\binom{x+z+f+1}{2}+(x+d)(x+z+f)(a+d)+f\binom{b+z+1}{2}+(x+y+d+e)\binom{b+1}{2}\notag\\
&+f(z+b)(x+y+z+a+d+e+f)+b(x+y+z+a+d+e+f)(x+y+d+e).
\end{align}}

\normalsize

\begin{prop}\label{ratioprop} For any non-negative integers $a,b,c,d,e,f,x,y,z$
\begin{equation}\label{ratioeq1}
\M_1\left(F\begin{pmatrix}x&y&z\\a&b&c\\d&e&f\end{pmatrix}\right)=q^{\textbf{g}}\sum_{\pi}q^{|\pi|}
\end{equation}
and
\begin{align}\label{ratioeq2}
\M_2\left(F\begin{pmatrix}x&y&z\\a&b&c\\d&e&f\end{pmatrix}\right)=q^{\textbf{h}}\sum_{\pi}q^{|\pi|},
\end{align}
where the sums are taken over all GPPs $\pi$ fitting in the compound box $\mathcal{B}:=\mathcal{B}\begin{pmatrix}x&y&z\\a&b&c\\d&e&f\end{pmatrix}$.
\end{prop}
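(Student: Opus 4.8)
The plan is to prove both identities by a single two-step strategy. Since the natural assignment $wt_0$ gives each tiling $T$ the weight $q^{|\pi_T|}$, we have $\sum_{\pi}q^{|\pi|}=\M_0(F)$, so \eqref{ratioeq1} and \eqref{ratioeq2} are equivalent to the assertions $\M_1(F)=q^{\textbf{g}}\,\M_0(F)$ and $\M_2(F)=q^{\textbf{h}}\,\M_0(F)$. Writing $wt_i(T)=q^{E_i(T)}$ for the exponent (so that $E_0(T)=|\pi_T|$), it suffices to prove that for $i=1,2$ the difference $E_i(T)-E_0(T)$ is independent of $T$, and then to show that its common value is $\textbf{g}$ (respectively $\textbf{h}$). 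I would establish the first part by a flip-invariance argument and the second part by evaluating the constant on the minimal tiling.

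For the flip-invariance step, recall that the set of tilings of $F$ is connected under flips, each flip rotating the three lozenges of a unit hexagon and changing the associated pile of cubes by exactly one unit cube; hence a single flip changes $E_0=|\pi_T|$ by $\pm 1$. A flip is a local move: it removes one right lozenge and inserts another, translated by one lattice step in the stacking direction, while the left and vertical lozenges it affects carry weight $1$ under both $wt_1$ and $wt_2$. Since $wt_1$ and $wt_2$ are defined precisely so that this elementary translation changes the distance to the southeast side, respectively to the base, by exactly one, each flip changes $E_1$ and $E_2$ by $\pm 1$ with the same sign as the change in $E_0$. Consequently $E_1-E_0$ and $E_2-E_0$ are flip-invariant, hence constant over all tilings of $F$; denote these constants by $c_1$ and $c_2$.

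To identify $c_1$ and $c_2$ it then suffices to evaluate $E_i-E_0$ on any one tiling, and the minimal tiling $T_0$ associated to the empty pile (where $|\pi_{T_0}|=0$) is the convenient choice, giving $c_1=E_1(T_0)$ and $c_2=E_2(T_0)$. The remaining task is to compute $E_1(T_0)$ and $E_2(T_0)$ directly and match them with $\textbf{g}$ and $\textbf{h}$. I would partition the right lozenges of $T_0$ according to the ten component boxes $B_1,\dots,B_{10}$ of $\mathcal{B}$ together with the forced region over the base. Within each block the right lozenges of the minimal tiling are arranged either in a rectangular array on which the relevant distance is constant, contributing a summand of the form (offset)$\times$(area) that matches a triple product of $\textbf{g}$ such as $a(c+x)(a+b+y)$, or along parallel lines on which this distance runs through consecutive integers, contributing (width)$\times\binom{r+1}{2}=$ (width)$\times(0+1+\cdots+r)$, matching a binomial summand such as $(x+z+d+f)\binom{y+b+1}{2}$. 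Summing over all blocks is designed to reproduce exactly the twelve terms of $\textbf{g}$, and the analogous computation using the distance to the base reproduces the eleven terms of $\textbf{h}$.

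The conceptual heart, namely the flip-invariance, is short; the genuine labor lies in the last step. The main obstacles will be to pin down the exact shape of the minimal tiling inside each component box, in particular near the three bowtie dents and where the projections of $B_3$ and $B_6$ overlap, and to track the precise offsets built into the definitions of $wt_1$ and $wt_2$ so that each block's $l$- or $k$-values are summed with the correct starting value. These are routine but intricate arithmetic-progression sums, and checking that their total collapses to the stated closed forms $\textbf{g}$ and $\textbf{h}$ is where essentially all the computational effort resides.
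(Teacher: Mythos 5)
Your proposal is correct, and it reaches the same final arithmetic as the paper, but the mechanism by which you establish that $\wt_i(T)/\wt_0(T)$ is independent of $T$ is genuinely different. The paper never argues abstractly that the ratio is constant: it splits the pile $\pi_T$ into the ten sub-piles in the component boxes $B_i$, encodes each induced tiling of $Hex(a_i,b_i,c_i)$ as a family of non-intersecting lozenge-paths with fixed endpoints, and observes that path $j$ in box $B_i$ carries exactly $a_i$ right lozenges, each of whose $wt_1$-weight exceeds its $wt_0$-weight by the factor $q^{x_i+j}$; summing gives the closed form $q^{\sum_i a_ib_ix_i+a_ib_i(b_i+1)/2}$ for an \emph{arbitrary} tiling in one stroke, with the $wt_2$ case handled by the transverse family of paths. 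Your route instead proves constancy by flip-invariance and then evaluates on the empty pile; this is sound, and the two ingredients you need are both available: flip-connectivity is immediate from the bijection with GPPs (piles ordered by containment form a distributive lattice whose covering relations are single-cube flips), and the sign computation is right, since adding a cube translates the unique affected right lozenge by the fixed stacking vector, which increases both the distance to the southeast side and the distance to the base by exactly $1$, so $E_1-E_0$ and $E_2-E_0$ are flip-invariant. Your evaluation on the minimal tiling then reproduces box by box exactly the paper's sums $a_ib_ix_i+a_i\binom{b_i+1}{2}$, since the right lozenges of the empty pile are precisely the unit rhombi of the bases $P_i$. What each approach buys: yours isolates a conceptual reason for constancy that works for any flip-connected dented hexagon without reference to the box decomposition, while the paper's path encoding computes the ratio directly and avoids having to describe the minimal tiling near the dents and the $B_3$/$B_6$ overlap. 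Both treatments, yours and the paper's, leave the final verification that the resulting sum equals $\textbf{g}$ (resp.\ $\textbf{h}$) as a routine substitution; you should state explicitly that flip-connectivity follows from the GPP lattice structure rather than merely recalling it.
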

The proof of Proposition \ref{ratioprop} follows the lines of the proof of Proposition 3.1 in \cite{Tri}. 

\begin{figure}\centering
\includegraphics[width=10cm]{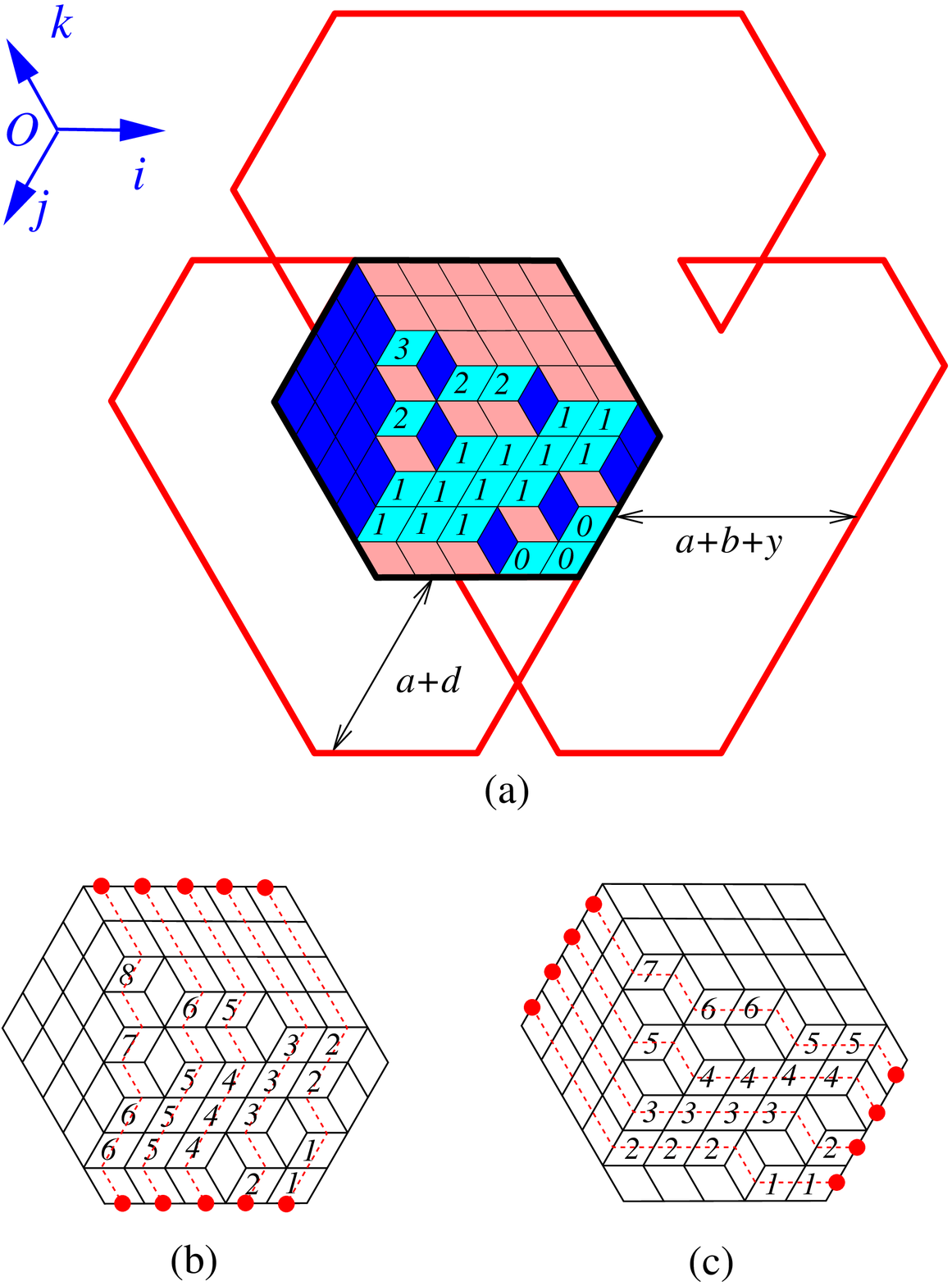}
\caption{(a) The sub-pile $\pi_4$ corresponding to the tiling in Figure \ref{3dentcombine}(a). (b) Encoding $T_4$ as a family of $b_4$ disjoint lozenge-paths. (c) Encoding $T_4$ as an $a_4$-tuple of disjoint lozenge-paths.}
\label{3dentratio}
\end{figure}

\begin{proof}
We assume that the component box $B_i$ ($1\leq i\leq 10$) in the compound box $\mathcal{B}$ has size $a_i\times b_i\times c_i$ (where $a_i,b_i,c_i$ can be determined explicitly from Figure \ref{3dentcombine}(d)). The base of $B_i$ is pictured by an $a_i\times b_i$ parallelogram $P_i$ as in Figure \ref{3dentcombine}(b). Assume that  the southeast side of $P_i$ is $x_i$ units to the left of the southeast side of $F$, and that the bottom of $P_i$ is $y_i$ units above the bottom of the region $F$ (we can get exactly the formulas of $x_i$ and $y_i$ from Figure \ref{3dentcombine}(b)).

Let $T$ be any lozenge tiling of $F$. View $T$ as a pile of unit cubes (i.e. a GPP) $\pi=\pi_{T}$. Divide $\pi$ into 10 disjoint sub-piles $\pi_i$'s fitting respectively in the component boxes $B_i$'s,  for $i=1,2,\dotsc,10$. Each sub-pile $\pi_i$ in turn yields a lozenge tiling $T_i$ of the hexagon $Hex(a_i,b_i,c_i)$ (see Figure \ref{3dentratio}(a) for an example).

Assume that the tiling $T$ is weighted by $wt_1$. This weight assignment yields a weight assignment for the lozenges in  the tiling $T_i$, for $i=1,2,\dotsc,10$. In particular,  each right lozenge in $T_i$ is weighted by $q^{x_i+l}$, where $l$ is the distance between the left side of the lozenge and the southeast side of the hexagon $Hex(a_i,b_i,c_i)$; and all left and vertical lozenges are weighted by $1$. Encode the tiling $T_i$ as a family of $b_i$ disjoint lozenge-paths connecting the top and the bottom of the hexagon (see Figure \ref{3dentratio}(b)). Dividing the weight of each right lozenge on the lozenge-path $j$ (from right to left) in $T_i$ by $q^{x_i+j}$, for $i=1,2,\dotsc,10$ and $j=1,2,\dotsc,b_i$,  we get back the weight assignment $wt_0$ on the tiling $T$. Since the end points of the lozenge-paths in each tiling $T_i$ are fixed, each of the above lozenge-path has exactly $a_i$ right lozenges.
Thus, the weight changing gives
\begin{equation}\label{ratioeq3}
\frac{\wt_1(T)}{\wt_0(T)}=\frac{\wt_1(T)}{q^{|\pi|}}=q^{\sum_{i=1}^{10}a_ib_ix_i+a_ib_i(b_i+1)/2}.
\end{equation}

Next, we assume that the tiling $T$ is weighted by $wt_2$. We now encode each tiling $T_i$ of $Hex(a_i,b_i,c_i)$ as an $a_i$-tuple of disjoint lozenge-paths connecting the northwest and the southeast sides of the hexagon (illustrated in Figure \ref{3dentratio}(b)). By dividing the weight of each right lozenge on the lozenge-path $j$ (from bottom to top) of  each tiling $T_i$ by $q^{y_i+j}$, for $i=1,2,\dotsc,10$ and $j=1,2,\dotsc,a_i$, we also get back the weight assignment $wt_0$ of $T$. Similar to the case of $wt_1$, we obtain
\begin{equation}\label{ratioeq4}
\frac{\wt_2(T)}{wt_0(T)}=\frac{\wt_2(T)}{q^{|\pi|}}=q^{\sum_{i=1}^{10}a_ib_iy_i+b_ia_i(a_i+1)/2}.
\end{equation}
 From Figures \ref{3dentcombine}(b) and (d), one gets the formulas for $a_i,b_i,x_i,y_i$ in terms of the 9 parameters $a,$ $b,$ $c,$ $d,$ $e,$ $f,$ $x,$ $y,$ $z$. Plugging these formulas into  (\ref{ratioeq3}) and (\ref{ratioeq4}), we get respectively (\ref{ratioeq1}) and (\ref{ratioeq2}).
\end{proof}

Apply the arguments in the above proof to the case when the compound box consists of a single component box, and using MacMahon's theorem, we get the following corollary.
\begin{cor}\label{lem0}
For any non-negative integers $a,b,c$
\begin{equation} \label{hexeq1}
\M_1\big(Hex(a,b,c)\big)=q^{ab(b+1)/2}\frac{\Hf_q(a)\Hf_q(b)\Hf_q(c)\Hf_q(a+b+c)}{\Hf_q(a+b)\Hf_q(b+c)\Hf_q(c+a)}
\end{equation}
and
\begin{equation}\label{hexeq2}
\M_2\big(Hex(a,b,c)\big)=q^{ba(a+1)/2}\frac{\Hf_q(a)\Hf_q(b)\Hf_q(c)\Hf_q(a+b+c)}{\Hf_q(a+b)\Hf_q(b+c)\Hf_q(c+a)}.
\end{equation}
\end{cor}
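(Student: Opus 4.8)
The plan is to obtain Corollary~\ref{lem0} by specializing the weight-tracking computation from the proof of Proposition~\ref{ratioprop} to the degenerate case in which the compound box $\mathcal{B}$ reduces to a single component box. I would take $F$ to be the plain hexagon $Hex(a,b,c)$, so that there is exactly one component box $B_1$, with $a_1=a$, $b_1=b$, $c_1=c$. The one preliminary point to verify is that, since this single box fills out the entire region, the offset parameters of that proof both vanish: the southeast side of the base $P_1$ coincides with the southeast side of $F$ and the bottom of $P_1$ coincides with the bottom of $F$, so $x_1=y_1=0$. In addition, when $\mathcal{B}$ is a single box, a GPP is nothing but an ordinary plane partition fitting in the $a\times b\times c$ box.

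With $x_1=y_1=0$, I would reuse the identities (\ref{ratioeq3}) and (\ref{ratioeq4}), now containing a single summand each. For an arbitrary tiling $T$ of $Hex(a,b,c)$, identified with its plane partition $\pi_T$, they collapse to
\[
\frac{\wt_1(T)}{q^{|\pi_T|}}=q^{a_1b_1x_1+a_1b_1(b_1+1)/2}=q^{ab(b+1)/2}
\qquad\text{and}\qquad
\frac{\wt_2(T)}{q^{|\pi_T|}}=q^{a_1b_1y_1+b_1a_1(a_1+1)/2}=q^{ba(a+1)/2}.
\]
The essential observation is that these exponents are independent of the chosen tiling $T$, so the two powers of $q$ are common multiplicative constants that factor out of the sum over all tilings.

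Summing over all tilings $T$ of $Hex(a,b,c)$ --- equivalently, over all plane partitions $\pi$ in the $a\times b\times c$ box --- then gives $\M_1\big(Hex(a,b,c)\big)=q^{ab(b+1)/2}\sum_{\pi}q^{|\pi|}$ and $\M_2\big(Hex(a,b,c)\big)=q^{ba(a+1)/2}\sum_{\pi}q^{|\pi|}$. The final step is to invoke MacMahon's $q$-theorem (\ref{qMacMahon}) to replace $\sum_{\pi}q^{|\pi|}$ by the quotient of $q$-hyperfactorials appearing on the right-hand side of (\ref{hexeq1}) and (\ref{hexeq2}), which completes the derivation. I do not expect any genuine obstacle in this argument: its entire content is the remark $x_1=y_1=0$, after which the statement is an immediate specialization of Proposition~\ref{ratioprop} combined with MacMahon's formula; the only care required is to confirm that the offsets vanish and that the GPPs degenerate to ordinary plane partitions in the single-box case.
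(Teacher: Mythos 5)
Your proposal is correct and is precisely the argument the paper intends: the text before Corollary \ref{lem0} says to apply the proof of Proposition \ref{ratioprop} to a compound box with a single component and then invoke MacMahon's theorem, which is exactly your specialization with $x_1=y_1=0$ collapsing (\ref{ratioeq3}) and (\ref{ratioeq4}) to the stated prefactors. No issues.
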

This corollary was also introduced as Corollary 3.2 in \cite{Tri}.

 \begin{figure}\centering
\includegraphics[width=5cm]{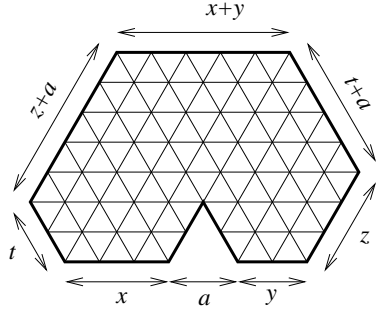}
\caption{The region $K_{2}(3,2,3,2)$.}
\label{holeyhex}
\end{figure}

\medskip

We remove an equilateral triangle of side-length $a$ from the south side of a hexagon with side-lengths $z+a$, $x+y,$  $t+a,$ $z,$ $x+y+a,$ $t$ as in Figure \ref{holeyhex}. Denote by $K_{a}(x,y,z,t)$ the resulting region. The following result was proven in \cite[Lemma 3.3]{Tri2}, based on a well-known bijection between the lozenge tilings of a $K$-type region and \emph{column-strict plane partitions} (see e.g. \cite{CLP} and \cite{car}), and the explicit formula of the generating function of the column-strict plane partitions (see e.g. \cite[pp. 374--375]{Stanley}).

\begin{lem}\label{qlem1}
For any non-negative integers $a,x,y,z,t$
\begin{align}
\M_2(K_{a}(x,y,z,t))&=q^{y\binom{z+1}{2}+x\binom{z+a+1}{2}}\frac{\Hf_q(a)\Hf_q(x)\Hf_q(y)\Hf_q(z)\Hf_q(t)}{\Hf_q(a+x)\Hf_q(a+y)\Hf_q(y+z)\Hf_q(t+x)}\notag\\
&\times\frac{\Hf_q(a+x+y)\Hf_q(a+y+z)\Hf_q(a+t+x)\Hf_q(a+x+y+z+t)}{\Hf_q(a+x+y+z)\Hf_q(a+x+y+t)\Hf_q(a+z+t)}.
\end{align}
\end{lem}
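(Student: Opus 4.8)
The plan is to reduce the computation of $\M_2(K_a(x,y,z,t))$ to a known generating-function identity for column-strict plane partitions, by transporting the weight assignment $wt_2$ across the classical path bijection and then invoking the product formula recorded in \cite[pp.\ 374--375]{Stanley}.

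First I would set up the bijection explicitly. As in the proof of Proposition \ref{ratioprop}, I encode a tiling $T$ of $K_a(x,y,z,t)$ by its family of non-intersecting lozenge-paths running between two opposite sides of the region, and read off the horizontal positions of the right lozenges along each path. The presence of the triangular notch of side-length $a$ forces these positions to be strictly decreasing from one path to the next while remaining weakly monotone within a path, so the resulting array is a column-strict plane partition $\pi$; one checks from Figure \ref{holeyhex} that the hexagon side-lengths $z+a,x+y,t+a,z,x+y+a,t$ together with the notch determine exactly a box inside which $\pi$ ranges, i.e. prescribe both the shape and the bound on the entries of $\pi$.

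Next I would track the weight. Since $wt_2$ assigns to each right lozenge the factor $q^k$, with $k$ the vertical distance from the top of the lozenge to the base of the region, the exponent collected over a whole tiling is an affine function of the entries of $\pi$: it equals the norm $|\pi|$ plus a fixed offset $c_0$ contributed by the minimal admissible configuration, in which every path hugs the bottom of $K_a(x,y,z,t)$. A careful reading of the geometry gives $c_0=y\binom{z+1}{2}+x\binom{z+a+1}{2}$, the two binomial coefficients being the cumulative baseline heights $1+2+\dots+z$ and $1+2+\dots+(z+a)$ dictated by the SE-side of length $z$ and the NW-side of length $z+a$, weighted respectively by the $y$ and $x$ paths forced to those heights. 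Hence $\M_2(K_a(x,y,z,t))=q^{c_0}\sum_{\pi}q^{|\pi|}$, the sum being taken over the column-strict plane partitions identified in the first step.

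Finally I would substitute the shape and entry bound into the explicit product formula for $\sum_{\pi}q^{|\pi|}$ (the principal specialization of the corresponding Schur function) from \cite{Stanley}, and rewrite the resulting product of $q$-integers in terms of the $q$-hyperfactorials $\Hf_q$. The combinatorial and bijective steps are routine; the main obstacle is the bookkeeping at the very end, namely verifying simultaneously that the offset collapses to exactly $y\binom{z+1}{2}+x\binom{z+a+1}{2}$ and that the Schur-function product telescopes into the stated ratio of eight $\Hf_q$-factors. This last simplification requires matching the hyperfactorial normalization through several index shifts, and is where the genuine care is needed.
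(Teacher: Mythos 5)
Your approach is exactly the one the paper itself indicates: the paper does not prove Lemma \ref{qlem1} but cites it from \cite[Lemma 3.3]{Tri2}, describing that proof as resting on the bijection between tilings of $K$-type regions and column-strict plane partitions together with the product formula from \cite[pp.~374--375]{Stanley}, which is precisely your plan, and your offset $y\binom{z+1}{2}+x\binom{z+a+1}{2}$ is consistent (e.g.\ at $a=0$ it reduces correctly to the exponent $(x+y)\binom{z+1}{2}$ of Corollary \ref{lem0} for $Hex(z,x+y,t)$). The remaining work you defer is genuinely only bookkeeping, so the proposal is correct and takes essentially the same route as the (cited) proof.
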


\section{Two new families of hexagons with dents}

In this section we $q$-enumerate lozenge tilings of two new hexagons with dents. We need these $q$-enumerations for the proof of Theorem \ref{qmain} in the next section.

\begin{figure}\centering
\includegraphics[width=6cm]{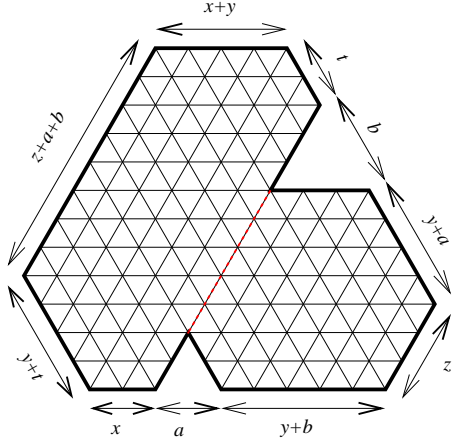}
\caption{The hexagon with two triangular dents $Q_{2,3}(2,2,3,2)$.}
\label{twodents}
\end{figure}

Starting with a hexagon of side-lengths $z+a+b$, $x+y$, $y+t+a+b,$ $z$, $x+y+a+b,$ $y+t$, we remove an $a$-triangle along the south side and a $b$-triangle along the northeast side of the hexagon as in Figure \ref{twodents}. Denote by $Q_{a,b}(x,y,z,t)$ the resulting region. We note that the left sides of the two triangular dents in our region are always on the same lattice line (illustrated by the dotted line in Figure \ref{twodents}).

Similar to the case of $F$-type regions, the lozenge tilings of $Q_{a,b}(x,y,z,t)$ are in bijection with the piles\footnote{Hereafter, we always assume that all piles of unit cubes have the same monotonicity as the GPPs.}
 of unit cubes fitting in the compound box $\mathcal{C}:=\mathcal{C}_{a,b}(x,y,z,t)$ (see Figure \ref{tiling2dent}(a)). To precise, the box $\mathcal{C}$ consists of $4$ component (rectangular) boxes $C_1,C_2,C_3,C_4$, whose bases are labelled by $1,2,3,4$ as in Figure \ref{tiling2dent}(b). Figure \ref{tiling2dent}(c) presents the projection of the box $\mathcal{C}$ on the $\textbf{Oij}$ plane. Finally, the connectivity of the component boxes in $\mathcal{C}$ is shown in \ref{tiling2dent}(d). Here we still use the notation $|\pi|$ for the volume of a pile of unit cubes $\pi$.
\begin{figure}\centering
\includegraphics[width=12cm]{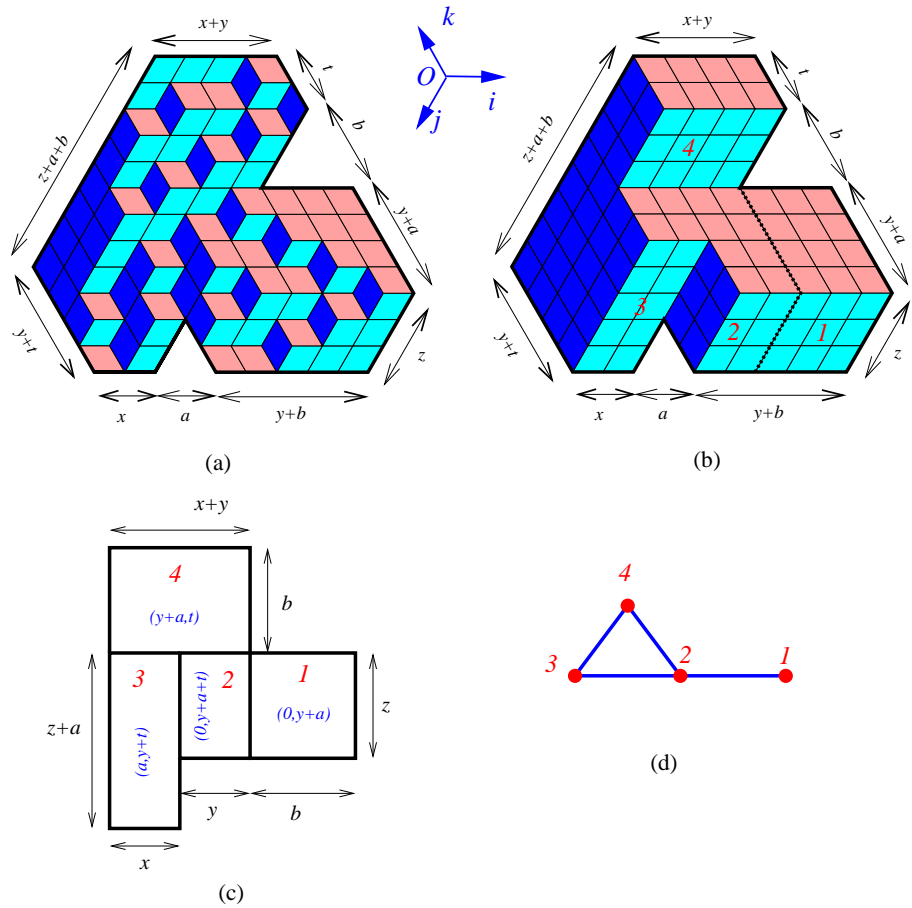}
\caption{(a) Viewing a lozenge tiling of the region $Q_{a,b}(x,y,z,t)$ as a pile of unit cubes fitting in the compound box $\mathcal{C}$. (b) The 3-D picture of the compound box $\mathcal{C}$. (c) The projection of the box $\mathcal{C}$ on the $\textbf{Oij}$ plane. (d) The connectivity of the component boxes in $\mathcal{C}$.}
\label{tiling2dent}
\end{figure}

\begin{thm}\label{qpremain1} For non-negative integers $a,$ $b,$ $x,$ $y,$ $z$, $t$
\begin{align}\label{qtwodenteq1}
q^{E}\sum_{\pi}q^{|\pi|}&=\M_2\left(Q_{a,b}(x,y,z,t)\right)\notag\\
&=q^{E}\frac{\Hf_q(x)\Hf_q(y)\Hf_q(z)\Hf_q(t)\Hf_q(a)\Hf_q(b)}{\Hf_q(a+x)\Hf_q(b+t)\Hf_q(a+b+y)}\notag\\
  &\times\frac{\Hf_q(a+b+x+2y+z+t)\Hf_q(a+b+x+y+t)}{\Hf_q(a+b+y+z+t)\Hf_q(a+b+x+2y+t)\Hf_q(a+b+x+y+z)}\notag\\
  &\times \frac{\Hf_q(a+x+y)\Hf_q(b+y+t)\Hf_q(a+b+y+t)^2}{\Hf_q(x+y+t)\Hf_q(a+y+z)\Hf_q(b+y+z)},
\end{align}
\normalsize where
\[E=(y+b)\binom{z+1}{2}+x\binom{a+z+1}{2}+b(x+y)(a+y+z)+(x+y)\binom{b+1}{2}\]
and where the sum is taken over all piles $\pi$ fitting in the compound box $\mathcal{C}$.
\end{thm}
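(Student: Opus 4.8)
The plan is to prove Theorem~\ref{qpremain1} by induction on the parameters using Kuo condensation (Theorem~\ref{kuothm1}), in the spirit of the recent applications cited in the introduction. First I would verify the first equality in \eqref{qtwodenteq1}, namely that $\M_2(Q_{a,b}(x,y,z,t)) = q^{E}\sum_{\pi}q^{|\pi|}$; this is the analogue of Proposition~\ref{ratioprop} for the $Q$-type region and follows by exactly the same lozenge-path weight-tracking argument, encoding each tiling $T$ as a pile in the compound box $\mathcal{C}$, decomposing $\pi$ into the four sub-piles $\pi_i$ living in the component boxes $C_i$, and computing the total discrepancy between $wt_2$ and $wt_0$ as $q^{\sum_i a_ib_iy_i + b_ia_i(a_i+1)/2}$. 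Reading off $a_i,b_i,x_i,y_i$ from Figure~\ref{tiling2dent}(b),(c) and summing should yield precisely the stated exponent $E$. With the first equality in hand, the substantive content is the product formula in the remaining lines.

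The main work is to establish the product formula, and for this I would set up a Kuo condensation recurrence on the dual graph $G$ of $Q_{a,b}(x,y,z,t)$ under the weight assignment $wt_2$. The key is to choose the four vertices $u,v,w,s$ on the outer face as corner unit triangles so that each of the six regions appearing in \eqref{kuoeq1}---namely $Q_{a,b}(x,y,z,t)$ itself and five regions obtained by deleting subsets of $\{u,v,w,s\}$---is again a $Q$-type region (or reduces to one after removing forced lozenges via \eqref{forcedeq}) with shifted parameters. A natural first attempt is to place the removed triangles at the four corners adjacent to the two ``free'' side-lengths $x$ and $t$ so that deleting them increments or decrements $x,t,z$; then each of the six matching generating functions is an instance of the formula we are proving, with smaller total parameter value, so the identity \eqref{kuoeq1} becomes a purely algebraic $q$-hyperfactorial identity to be checked. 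I would also need base cases: when one of the dent sizes $a$ or $b$ is zero, the region degenerates (after removing forced lozenges) to a $K$-type region, whose generating function is given by Lemma~\ref{qlem1}, and when one of $x,y,z,t$ vanishes the region simplifies to a plain hexagon covered by Corollary~\ref{lem0}.

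Concretely, the inductive scheme would reduce \eqref{qtwodenteq1} to verifying that the right-hand side product $P(a,b,x,y,z,t)$, together with the power-of-$q$ prefactor coming from the $wt_2$ weights on the forced lozenges, satisfies
\[
P(G)\,P(G^{uvws}) = P(G^{uv})\,P(G^{ws}) + P(G^{us})\,P(G^{vw}),
\]
where each superscript records the parameter shift induced by deleting the corresponding triangles. Since every factor of $P$ is a ratio of $\Hf_q$'s whose arguments are linear in the six parameters, each term on both sides factors as a monomial in $q$ times a common $q$-hyperfactorial core, and the identity collapses to checking that the three $q$-power prefactors combine correctly (the two summands on the right must contribute equal cofactors so that they add to the left via the trivial identity coming from matching the shifted arguments). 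This last bookkeeping of $q$-exponents is where I expect the main obstacle to lie: getting the weight-shift exponents consistent across the six regions requires tracking precisely how the deleted corner triangles and the forced lozenges exposed by their removal contribute under $wt_2$, and a single off-by-one in these exponents breaks the recurrence. The hardest part will therefore be the careful determination of the correct corner vertices $u,v,w,s$ making all six regions of the same type, together with the exact $q$-power accounting for the forced-lozenge reductions; once those are pinned down, the algebraic verification of the $\Hf_q$-identity is routine cancellation.
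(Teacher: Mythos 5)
Your first paragraph (the reduction of the first equality to the lozenge-path weight-tracking argument of Proposition~\ref{ratioprop}) matches the paper exactly and is fine. The gap is in the condensation step. You propose to apply the balanced Kuo identity (Theorem~\ref{kuothm1}) directly to $Q_{a,b}(x,y,z,t)$ with $u,v,w,s$ at corner triangles adjacent to the $x$- and $t$-sides, so that the six regions differ only in $x,t,z$. This scheme never touches the dent parameters $a,b$, so the induction can never reach the base cases you name: the cases $a=0$ or $b=0$ are unreachable by decrementing $x,t,z$, and your fallback claim that ``when one of $x,y,z,t$ vanishes the region simplifies to a plain hexagon'' is false --- when $x=0$ or $z=0$ both dents are still present and the region is not of any previously treated type; when $y=0$ it splits into a \emph{product} of two hexagons via Lemma~\ref{GS}, and when $t=0$ it reduces (after removing forced lozenges) to a $K$-type region, not a hexagon. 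There is also a structural obstruction to your vertex choice: all six regions must remain $Q$-type, i.e.\ the left sides of the two dents must stay on a common lattice line, and independent shifts of $x,t,z$ at the outer corners do not preserve this alignment in all six terms.

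The paper's actual argument uses a device you are missing: it does \emph{not} condense on $Q$ itself, but on the auxiliary region $R$ obtained by adding a band of $2b-1$ unit triangles along the bottom of the $b$-dent, and it applies the \emph{unbalanced} Kuo identity (Theorem~\ref{kuothm2}, with $|V_1|=|V_2|+1$) with all four marked triangles clustered at that dent. Removing them forces lozenges whose $wt_2$-weights are collected explicitly, and the resulting recurrence \eqref{q2denteq7} relates $Q_{a,b}(x,y,z,t)$ to regions with parameters $(b-1,y,z+1,t-1)$, $(b-1,y,z+1,t)$, $(b,y,z,t-1)$, $(b-1,y+1,z,t-1)$, $(b,y-1,z+1,t)$; the induction is on $y+t+2b$, which strictly decreases for every region except the first, and bottoms out at the three genuine base cases $b=0$ ($K$-type, Lemma~\ref{qlem1}), $y=0$ (Region-splitting into two hexagons, Corollary~\ref{lem0}), and $t=0$ (forced lozenges reduce to a $K$-type region). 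Without the band-augmentation and the switch to Theorem~\ref{kuothm2}, your recurrence cannot decrement $b$, and the induction does not close.
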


\begin{figure}\centering
\includegraphics[width=12cm]{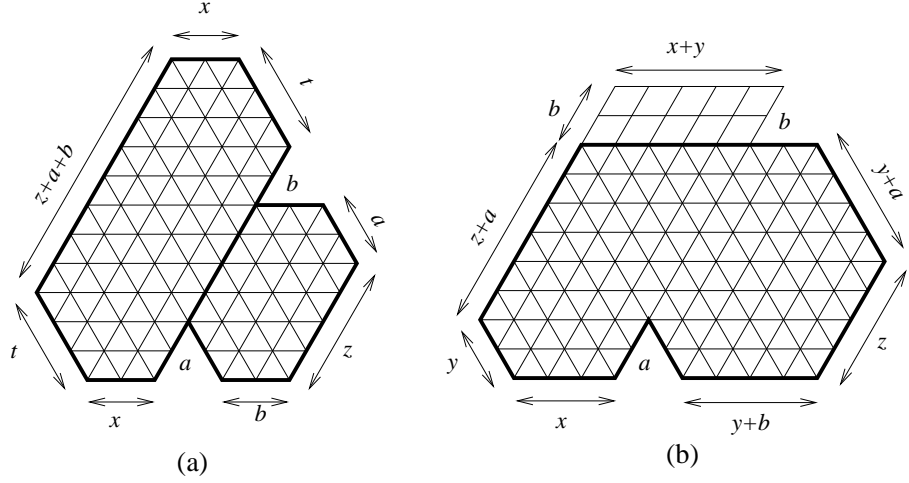}
\caption{The base cases when (a) $y=0$ and (b) $t=0$ in the proof of Theorem \ref{qpremain1}.}
\label{BC2dent}
\end{figure}

\begin{figure}\centering
\includegraphics[width=6cm]{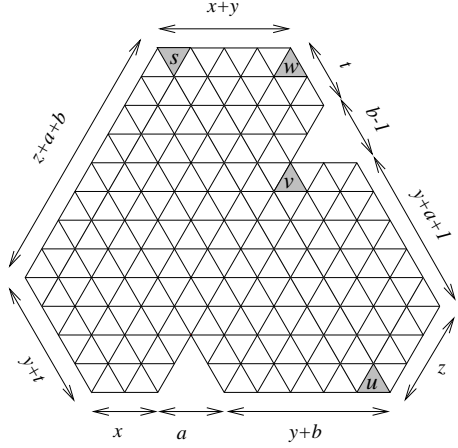}
\caption{How we apply Kuo condensation to a $Q$-type region.}
\label{Kuo2dent}
\end{figure}

\begin{figure}\centering
\includegraphics[width=13cm]{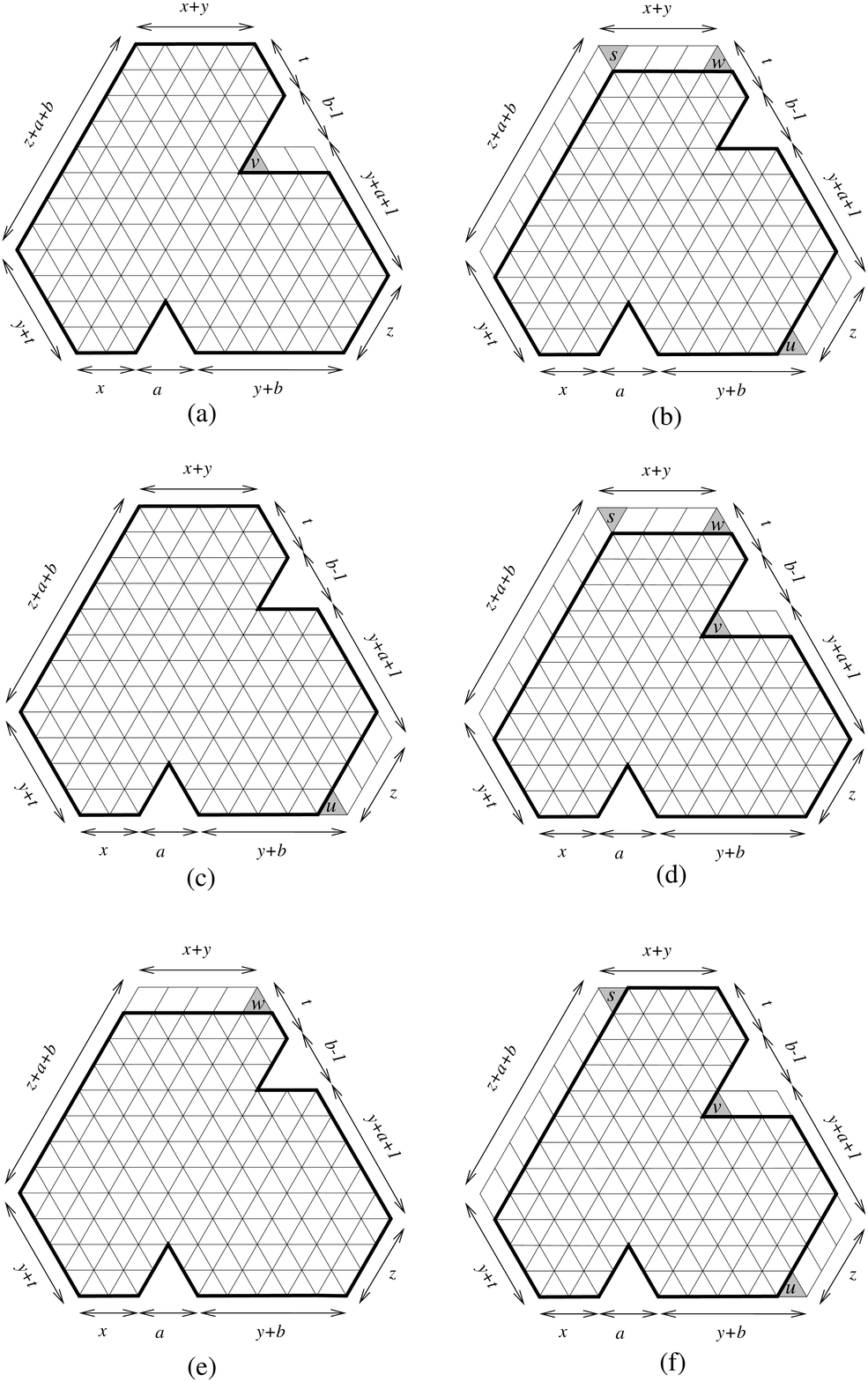}
\caption{Obtaining recurrence of the tiling generating functions of $Q$-type regions.}
\label{Kuo2dentb}
\end{figure}

\begin{proof}
First, by the same arguments as in the proof of Proposition \ref{ratioprop}, we have for any tiling $T$ of the region $Q:=Q_{a,b}(x,y,z,t)$:
\begin{equation}\label{ratio2dent2}
\wt_2(T)/q^{|\pi_T|}=q^{\sum_{i=1}^{4}a_ib_iy_i+b_ia_i(a_i+1)/2},
\end{equation}
where the component box $C_i$ of $\mathcal{C}$ has size $a_i\times b_i \times c_i$, and the base $P_i$ of $C_i$ is $y_i$ units above the base of the region in Figure \ref{tiling2dent}(b). Obtaining the formulas for $a_i,b_i,y_i$ in terms of $a,b,x,y,z,t$ from Figure \ref{tiling2dent}, and summing the equation (\ref{ratio2dent2}) over all tilings $T$ of $Q$, we get
\begin{equation}
\M_2(Q)/\sum_{\pi}q^{|\pi|}=q^{E}.
\end{equation}
This means that we only need to prove the second equal sign in (\ref{qtwodenteq1}).

\medskip

Next, we prove the second equal sign in (\ref{qtwodenteq1}) by induction on $y+t+2b$. Our base cases are the situations when at least one of the three parameters $b,y,t$ is equal to $0$.

Assume that our region $Q$ is weighted by $\wt_2$. If $b=0$, our region is exactly a $K$-type region in Lemma \ref{qlem1}, and the second equal sign in (\ref{qtwodenteq1}) follows.
If $y=0$, Region-splitting Lemma \ref{GS} gives us
\begin{equation}
\M_2\big(Q_{a,b,}(x,0,z,t)\big)=\M_2\big(Hex(z+a+b,x,t)\big)\M_2\big(Hex(z,b,a)\big)
\end{equation}
(see Figure \ref{BC2dent}(a)). Then the second equal sign of (\ref{qtwodenteq1}) follows from Corollary \ref{lem0}.
 If $t=0$, there are several forced right lozenges on the top of our region as in Figure \ref{BC2dent}(b). By removing these forced lozenges, we obtain a $K$-type region weighted by $wt_2$. Collecting the weights of those forced lozenges, we get
\begin{equation}
\M_2\big(Q_{a,b,}(x,y,z,0)\big)=q^{b(x+y)(y+z+a)+(x+y)\binom{b+1}{2}}\M_2\big(K_{a}(x,y+b,z,y)\big),
\end{equation}
and the second equal sign in (\ref{qtwodenteq1}) follows from Lemma \ref{qlem1} again.

\medskip

For the induction step, we assume that  $b,t,y>0$, and that (\ref{qtwodenteq1}) holds for any $Q$-type regions in which the sum of the $y$-parameter, the $t$-parameter, and twice the $b$-parameter is strictly less than $y+t+2b$.

We apply Kuo's Theorem \ref{kuothm2} to the dual graph $G$ of the region $R$ weighted by $\wt_2$, where $R$ is the region obtained from the region $Q_{a,b}(x,y,z,t)$ by adding a band of $2b-1$ unit triangles along the bottom of the $b$-hole (see Figure \ref{Kuo2dent}). The unit triangles corresponding to the four vertices $u,v,w,s$ are illustrated as the four shaded unit triangles in Figure \ref{Kuo2dent}. In particular, the bottommost shaded unit triangle corresponds to $u$, and the unit triangles corresponding to $v,w,s$ are the next shaded unit triangles when we move counter-clockwise from the bottommost one. By collecting the weights of the lozenges forced by the removal of the shaded unit triangles as in Figures \ref{Kuo2dentb}(a)--(f), we have respectively
\begin{equation}\label{q2denteq1}
\M(G-\{v\})=\M_2\big(Q_{a,b}(x,y,z,t)\big),
\end{equation}
\begin{equation}\label{q2denteq2}
\M(G-\{u,w, s\})=q^{(x+y-1)(y+z+t+a+b)}\M_2\big(Q_{a,b-1}(x,y,z+1,t-1)\big),
\end{equation}
\begin{equation}\label{q2denteq3}
\M(G-\{u\})=\M_2\big(Q_{a,b-1}(x,y,z+1,t)\big),
\end{equation}
\begin{equation}\label{q2denteq4}
\M(G-\{v,w,s\})=q^{(x+y-1)(y+z+t+a+b)}\M_2\big(Q_{a,b}(x,y,z,t-1)\big),
\end{equation}
\begin{equation}\label{q2denteq5}
\M(G-\{w\})=q^{(x+y)(y+z+t+a+b)}\M_2\big(Q_{a,b-1}(x,y+1,z,t-1)\big),
\end{equation}
and
\begin{equation}\label{q2denteq6}
\M(G-\{u,v,s\})=\M_2\big(Q_{a,b}(x,y-1,z+1,t)\big).
\end{equation}

Substituting the above six identities (\ref{q2denteq1}) -- (\ref{q2denteq6}) into the equation (\ref{kuoeq2}) in Kuo's Theorem \ref{kuothm2}, we get
\begin{align}\label{q2denteq7}
\M_2\big(Q_{a,b}(x,y,z,t)\big)&\M_2\big(Q_{a,b-1}(x,y,z+1,t-1)\big)=\notag\\
&\M_2\big(Q_{a,b-1}(x,y,z+1,t)\big)\M_2\big(Q_{a,b}(x,y,z,t-1)\big)\notag\\
&+q^{y+z+t+a+b}\M_2\big(Q_{a,b-1}(x,y+1,z,t-1)\big)\M_2\big(Q_{a,b}(x,y-1,z+1,t)\big).
\end{align}
By the induction hypothesis, all regions in (\ref{q2denteq7}), except for the first one, have the tiling generating function given by (\ref{qtwodenteq1}). Substituting these formulas into (\ref{q2denteq7}) and simplifying, one gets that $\M_2\big(Q_{a,b}(x,y,z,t)\big)$ is given exactly by the expression after the second equal sign of (\ref{q2denteq7}). We finish our proof here.
\end{proof}

To prove Theorem \ref{qmain}, we need also the following variation of Theorem \ref{qpremain1}.

\begin{thm}\label{qpremain2}
Assume that all right and vertical lozenges of the region $Q_{a,b}(x,y,z,t)$ have weight 1. We now assign to any left lozenge a weight $q^{l}$, where $l$ is the distance from the top of the lozenge to the bottom of the region. Denote by $\M_3(Q_{a,b}(x,y,z,t))$ the tiling generating function of $Q_{a,b}(x,y,z,t)$ with respect to the new weight assignment. Then
\begin{align}\label{qtwodenteq2}
\M_3\big(Q_{a,b}(x,y,z,t)\big)&=q^{b\binom{a+y+1}{2}+y\binom{a+y+t+1}{2}+x\binom{y+t+1}{2}}\frac{\Hf_q(x)\Hf_q(y)\Hf_q(z)\Hf_q(t)\Hf_q(a)\Hf_q(b)}{\Hf_q(a+x)\Hf_q(b+t)\Hf_q(a+b+y)}\notag\\
  &\times\frac{\Hf_q(a+b+x+2y+z+t)\Hf_q(a+b+x+y+t)}{\Hf_q(a+b+y+z+t)\Hf_q(a+b+x+2y+t)\Hf_q(a+b+x+y+z)}\notag\\
  &\times \frac{\Hf_q(a+x+y)\Hf_q(b+y+t)\Hf_q(a+b+y+z)^2}{\Hf_q(x+y+t)\Hf_q(a+y+z)\Hf_q(b+y+z)}.
\end{align}
\end{thm}
We give here two different proofs of Theorem \ref{qpremain2}. The first proof follows the lines of the proof of Theorem \ref{qpremain1}, while the second proof is based on the arguments in the proof of Proposition \ref{ratioprop}.
\begin{proof} [The first proof of Theorem \ref{qpremain2}]
\normalsize Similar to Theorem \ref{qpremain1}, we prove (\ref{qtwodenteq2}) by induction on $y+t+2b$ with the base cases are: $b=0$, $y=0$, and $t=0$.

\medskip

Consider the region $Q_{a,b}(x,y,z,t)$ weighted by the new weight assignment.

 If $b=0$, we reflect our region about a vertical line to get the region $K_{a}(y,x,y+t,z)$ weighted by $\wt_2$. Then (\ref{qtwodenteq2}) follows from Lemma \ref{qlem1}. If $y=0$, we also split the region into two parts by using Region-splitting Lemma \ref{GS} as in Figure \ref{BC2dent}(a).  However, we need to reflect these parts about a vertical line to get two hexagons weighted by $\wt_2$. In particular, we get
 \begin{equation}
 \M_3\big(Q_{a,b}(x,0,z,t)\big)=\M_2\big(Hex(t,x,z+a+b)\big)\M_2\big(Hex(a,b,z)\big),
 \end{equation}
 then (\ref{qtwodenteq2}) follows from Corollary \ref{lem0}. Finally, if $t=0$, by removing the forced right lozenges (which have all weight 1 in the new weight assignment) as in Figure \ref{BC2dent}(b), we get a $K$-type region. However, we also need to reflect this region about a vertical line to get back the weight assignment $\wt_2$. To precise, we get
  \begin{equation}
 \M_3\big(Q_{a,b}(x,y,z,0)\big)=\M_2\big(K_{a}(y+b,x,y,z)\big),
 \end{equation}
 and  (\ref{qtwodenteq2}) is implied by Lemma \ref{qlem1} again.

\medskip

 The induction step is completely analogous to the induction step in the proof of Theorem \ref{qpremain1}, the only difference is that the forced lozenges have \emph{different} weights. In particular, by collecting the new weights of the forced lozenges in Figure \ref{Kuo2dentb}, we get the following new recurrence for $\M_3$-generating functions:
\begin{align}\label{q2denteqn}
\M_3\big(Q_{a,b}(x,y,z,t)\big)&\M_3\big(Q_{a,b-1}(x,y,z+1,t-1)\big)=\notag\\
&\M_3\big(Q_{a,b-1}(x,y,z+1,t)\big)\M_3\big(Q_{a,b}(x,y,z,t-1)\big)\notag\\
&+\M_3\big(Q_{a,b-1}(x,y+1,z,t-1)\big)\M_3\big(Q_{a,b}(x,y-1,z+1,t)\big),
\end{align}
and by the induction hypothesis and some simplifications, one gets that $\M_3\big(Q_{a,b}(x,y,z,t)\big)$ is given exactly by the expression on the right-hand side of (\ref{qtwodenteq2}).
\end{proof}

\begin{proof}[The second proof of Theorem \ref{qpremain2}]
Let $T$ be any lozenge tiling the region $Q:=Q_{a,b}(x,y,z,t)$. We introduce an analog $wt'$ of the natural  $q$-weight assignment $wt_0$ as follows. View $T$ as a pile $\pi=\pi_T$ of unit cubes fitting in the compound box $\mathcal{C}$. Each left lozenge is now the front face of a column of unit cubes parallel to $\overrightarrow{\textbf{Oj}}$. To distinguish to the normal columns, we call these columns \emph{$j$-columns}. We assign to each left lozenge a weight $\left(\frac{1}{q}\right)^{x}$, where $x$ is the number of unit cubes in the corresponding $j$-column (see Figure \ref{tiling2dentb}(a); the left lozenge having label $x$ is weighted by $\left(\frac{1}{q}\right)^{x}$). Finally, all right and vertical lozenges are weighted by $1$. It is easy to see that
\[wt'(T)=\left(\frac{1}{q}\right)^{|\pi|}=\frac{1}{wt_0(T)}.\]

\begin{figure}\centering
\includegraphics[width=12cm]{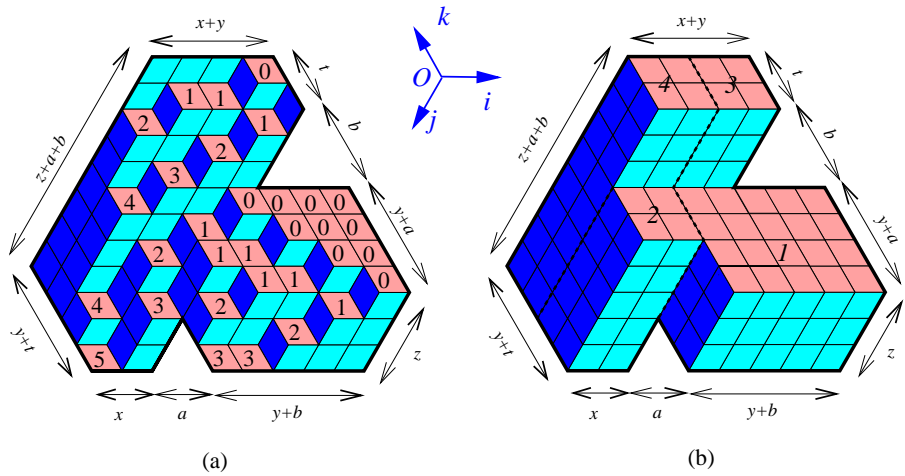}
\caption{(a) The weight assignment $wt'$ on the lozenges of $Q_{a,b}(x,y,z,t)$. (b) New decomposition of the compound box $\mathcal{C}$ into 4 component boxes with bases labelled by $1,2,3,4$.}
\label{tiling2dentb}
\end{figure}

Next, we show that the new weight assignment and $wt'$ are only different by a multiplicative factor. Partition $\mathcal{C}$ into four new component boxes $C'_1,C'_2,C'_3,C'_4$ with the far faces are labelled by $1,2,3,4$ as in Figure \ref{tiling2dentb}(b). The base of the component box $C'_i$ is pictured as a parallelogram $P'_i$ staying $x_i$ units above the base of the region $Q$. This yields a partition of the pile $\pi$ into four sub-piles $\pi_i$'s fitting in $C'_i$ of size $a_i\times b_i \times c_i$. Each sub-pile $\pi_i$ in turn gives a tiling $T_i$ of the hexagon $Hex(a_i,b_i,c_i)$. Encode each tiling $T_i$ as a $c_i$-tuple of disjoint lozenge-paths connecting the northeast and the southwest sides of the hexagon. Divide the weight of each left lozenge in path $j$ (from right to left) of each tiling $T_i$ by $q^{x_i+j}$, we get back the weight assignment $wt'$ and
\begin{equation}
\frac{wt_3(T)}{wt'(T)}=q^{(x+y)\left(\binom{a+b+y+z+t+1}{2}-\binom{a+b+y+z+1}{2}\right)+x\left(\binom{a+y+z+1}{2}-\binom{a+z+1}{2}\right)+(y+b)\left(\binom{a+y+z+1}{2}-\binom{z+1}{2}\right)}.
\end{equation}
Summing up over all tilings $T$ of $Q$, we get
\begin{equation}
\frac{\M_3(Q)}{\sum_{\pi}{q}^{-|\pi|}}=q^{(x+y)\left(\binom{a+b+y+z+t+1}{2}-\binom{a+b+y+z+1}{2}\right)+x\left(\binom{a+y+z+1}{2}-\binom{a+z+1}{2}\right)
+(y+b)\left(\binom{a+y+z+1}{2}-\binom{z+1}{2}\right)},
\end{equation}
and the theorem follows from Theorem \ref{qpremain1} (\emph{with $q$ is repalced by $q^{-1}$}) and the simple fact $[n]_{q^{-1}}=[n]_q/q^{n-1}$.
\end{proof}

\begin{figure}\centering
\includegraphics[width=7cm]{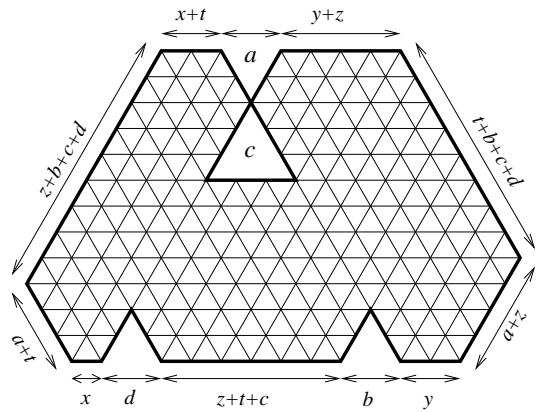}
\caption{A $B$-type region.}
\label{dentbar}
\end{figure}

 Next, we consider a new family of hexagons with three dents.

 Let $a,b,c,d,x,y,z,t$ be eight non-negative integers.  We remove a bowtie from the north side, and two triangles  from the south side of a hexagon of side-lengths $z+b+c+d$, $x+y+z+t+a$, $t+b+c+d$, $z+a$, $x+y+z+t+b+c+d$, $t+a$. The locations and the sizes of the dents are indicated as in Figure \ref{dentbar} (for the case $a=2$, $b=2$, $c=3$, $d=2$, $x=1$, $y=2$, $z=2$, $t=1$). Denote by $B\begin{pmatrix}x&y&z&t\\a&b&c&d\end{pmatrix}$ the resulting region.

The lozenge tilings of the region $B\begin{pmatrix}x&y&z&t\\a&b&c&d\end{pmatrix}$ can be viewed as piles of unit cubes fitting in the $7$-component compound box $\mathcal{D}:=\mathcal{D}\begin{pmatrix}x&y&z&t\\a&b&c&d\end{pmatrix}$ illustrated in Figure \ref{tilingdentbar}. In particular, $\mathcal{D}$ is decomposed into $7$ component boxes $D_1,D_2,\dotsc,D_7$ with bases labelled by $1,2,\dotsc,7$ in Figure \ref{tilingdentbar}(b).

\begin{figure}\centering
\includegraphics[width=15cm]{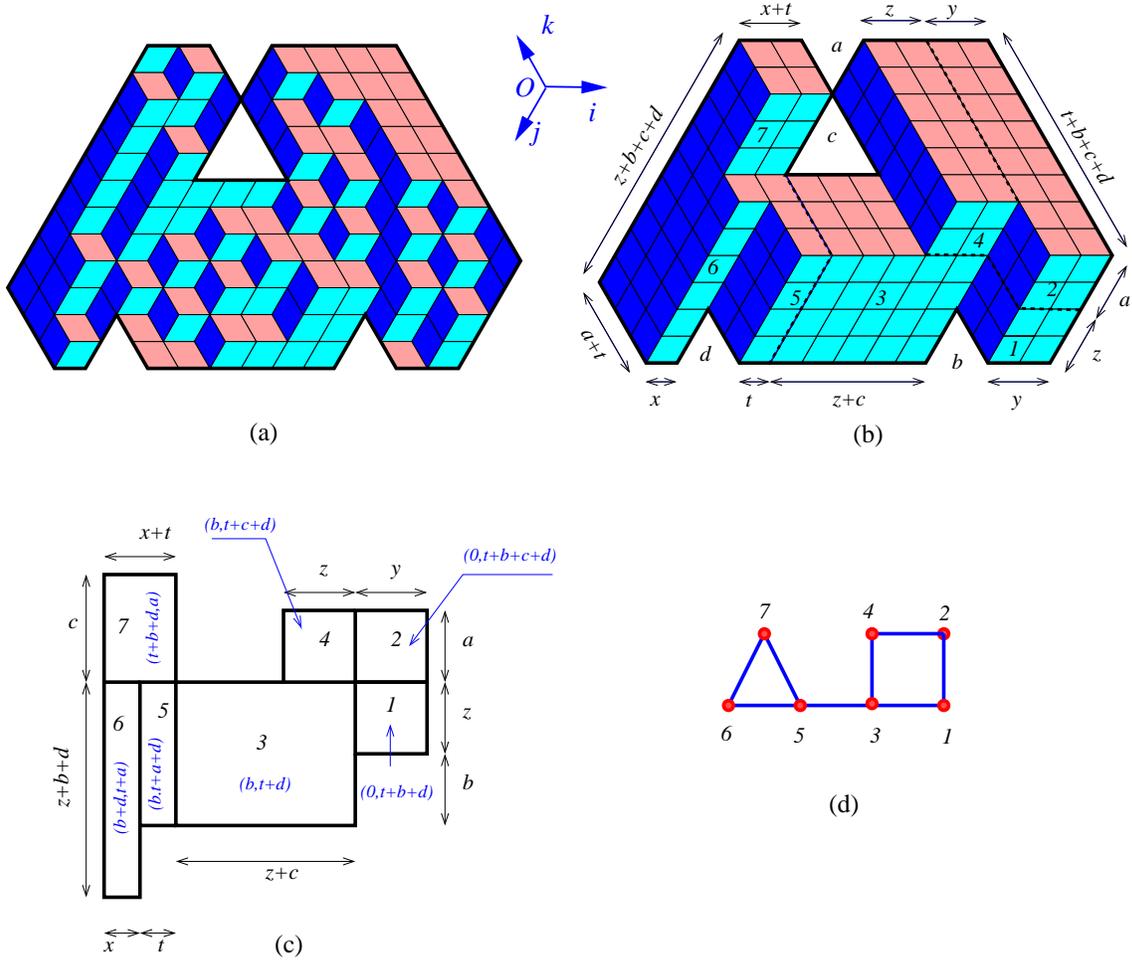}
\caption{ (a) Viewing a lozenge tiling of a $B$-type region as a pile of unit cubes fitting in the compound box $\mathcal{D}$. (b) A 3-D picture of the compound box $\mathcal{D}$ (or the empty pile). (c) The projection of $\mathcal{D}$ on the $\textbf{Oij}$ plane. (d) The  connectivity of the component boxes in $\mathcal{D}$.}
\label{tilingdentbar}
\end{figure}

\begin{thm}\label{qdentbarthm} For non-negative integers $a,$ $b,$ $c,$ $d,$ $x,$ $y,$ $z$, $t$
\begin{align}\label{qdentbareq}
q^{A}&\sum_{\pi}q^{|\pi|}=\M_2\left(B\begin{pmatrix}x&y&z&t\\a&b&c&d\end{pmatrix}\right)\notag\\
&=q^{A}\frac{\Hf_q(x)\Hf_q(y)\Hf_q(z)\Hf_q(t)\Hf_q(a)^2\Hf_q(b)\Hf_q(c)\Hf_q(d)}{\Hf_q(a+c)\Hf_q(b+x)\Hf_q(d+y)}\notag\\
&\times\frac{\Hf_q(a+b+c+d+y+z+2t)\Hf_q(a+b+c+d+x+2z+t)}{\Hf_q(a+c+d+y+z+2t)\Hf_q(a+b+c+x+2z+t)}\notag\\
 &\times \frac{\Hf_q(a+b+c+d+x+y+2z+2t)\Hf_q(a+b+c+d+x+y+z+t)}{\Hf_q(a+b+c+d+x+y+z+2t)\Hf_q(a+b+c+d+x+y+2z+t)}\notag\\
  &\times\frac{\Hf_q(a+b+c+x+z+t)\Hf_q(a+c+d+y+z+t)\Hf_q(b+c+d+z+t)^3}{\Hf_q(b+c+d+x+z+t)\Hf_q(b+c+d+y+z+t)\Hf_q(a+b+c+d+z+t)^2}\notag\\
  &\times \frac{\Hf_q(d+y+t)\Hf_q(b+x+z)\Hf_q(a+c+z+t)}{\Hf_q(b+c+z+t)\Hf_q(c+d+z+t)\Hf_q(a+x+z)\Hf_q(a+y+t)\Hf_q(b+d+z+t)},
\end{align}
\normalsize where
\begin{align}
A=A\begin{pmatrix}x&y&z&t\\a&b&c&d\end{pmatrix}&:=y\binom{a+z+1}{2}+(c+z+t)\binom{z+b+1}{2}+x\binom{b+d+z+1}{2}\notag\\
&+az(b+z)+z\binom{a+1}{2}+c(x+t)(b+d+z+t)+(x+t)\binom{c+1}{2}
\end{align}
and where the sum is taken over all piles $\pi$ fitting in the compound box $\mathcal{D}$.
\end{thm}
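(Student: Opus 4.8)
The plan is to establish the two equalities in (\ref{qdentbareq}) in the same two stages used for Theorem \ref{qpremain1}: first the scalar relation between $\M_2$ and the GPP generating function, then the closed product formula for $\M_2$ by induction together with Kuo condensation.

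First I would prove $q^{A}\sum_\pi q^{|\pi|}=\M_2\!\left(B\begin{pmatrix}x&y&z&t\\a&b&c&d\end{pmatrix}\right)$ by repeating the lozenge-path weight-tracking argument behind Proposition \ref{ratioprop}. Decompose $\mathcal{D}$ into its seven component boxes $D_1,\dots,D_7$ of sizes $a_i\times b_i\times c_i$, with base $P_i$ sitting $y_i$ units above the base of the region. Viewing each tiling $T$ as a GPP $\pi_T$ and encoding each induced tiling of $Hex(a_i,b_i,c_i)$ as an $a_i$-tuple of disjoint lozenge-paths exactly as in the derivation of (\ref{ratioeq4}), one gets $\wt_2(T)/q^{|\pi_T|}=q^{\sum_{i=1}^{7}a_ib_iy_i+b_ia_i(a_i+1)/2}$, independently of $T$. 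Reading the values of $a_i,b_i,y_i$ off Figure \ref{tilingdentbar}(b),(c) and summing over all $T$ gives the exponent $A$. This stage is pure bookkeeping; the only care needed is transcribing the seven box dimensions correctly.

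The substance is the product formula for $\M_2(B(\cdots))$, which I would prove by induction on a quantity of the form (positioning parameters) $+\,2\cdot$(a dent size), in direct analogy with the induction on $y+t+2b$ used for $Q$-type regions. The base cases occur when one of the relevant parameters vanishes: in each such case the $B$-type region either reduces to a $Q$-type region (so that Theorem \ref{qpremain1} or \ref{qpremain2} applies), or splits, via the Region Splitting Lemma \ref{GS}, into a semiregular hexagon and a simpler dented region (so that Corollary \ref{lem0} and Lemma \ref{qlem1} apply), possibly after stripping a forced band of right lozenges and collecting their $\wt_2$-weights. For the induction step I would add a band of unit triangles along the bottom of one of the dents (the analogue of the band added to the $b$-hole in Theorem \ref{qpremain1}) so that the dual graph $G$ of the enlarged region satisfies $|V_1|=|V_2|+1$, and then apply Kuo's Theorem \ref{kuothm2} to four cells $u,v,w,s$ chosen around the band as in Figure \ref{Kuo2dent}. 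Collecting the lozenges forced by each deletion, the six factors $\M(G-\{\cdots\})$ all become $\wt_2$-weighted $B$-type regions with one dent parameter lowered and the positioning parameters shifted, each multiplied by an explicit power of $q$; this produces a three-term recurrence of exactly the shape of (\ref{q2denteq7}).

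The main obstacle is the closing simplification. Feeding the five induction-hypothesis values into this recurrence and checking that it forces $\M_2(B(\cdots))$ to equal the right-hand side of (\ref{qdentbareq}) is a ratio identity among $q$-hyperfactorials with far more factors than in the $Q$-case; although every cancellation is mechanical via $\Hf_q(n)=\Hf_q(n-1)\,[n-1]_q!$ and the identity $[n]_{q^{-1}}=[n]_q/q^{n-1}$, the sheer number of factors, together with the need to keep the forced-lozenge $q$-exponents consistent with the prefactor $A$ under each parameter shift, makes this the delicate and error-prone part of the argument. A useful safeguard is to first verify the $q=1$ specialization, reducing everything to ordinary hyperfactorials, and only then reinstate the $q$-powers, tracking them carefully through the band and the forced lozenges.
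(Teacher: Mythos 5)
Your first stage (the ratio $\M_2(B)/\sum_\pi q^{|\pi|}=q^A$ via the seven-box decomposition and lozenge-path bookkeeping) and your base cases (a positioning parameter vanishing, reduction to a hexagon plus a $Q$-type region via Lemma \ref{GS}, Corollary \ref{lem0}, and Theorems \ref{qpremain1} and \ref{qpremain2}) match the paper. The genuine gap is in the induction step. You transplant the mechanism of Theorem \ref{qpremain1} wholesale: add a band of unit triangles along a dent, apply the unbalanced Kuo identity (Theorem \ref{kuothm2}), and obtain a recurrence ``of exactly the shape of (\ref{q2denteq7})'' in which a dent size drops by one; accordingly you propose inducting on (positioning parameters) $+\,2\cdot$(a dent size). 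The paper does something different, and you have not verified that your version exists: it applies the \emph{balanced} Kuo identity (Theorem \ref{kuothm1}) directly to the $B$-region, with no added band, choosing four cells whose removal forces only weight-$1$ lozenges; the resulting recurrence (\ref{dentbareq6}) pairs $B$-regions with parameters such as $(z-1,t,c,d+1)$ and $(z,t-1,c+1,d)$, so the dent sizes $c,d$ \emph{increase} while $z+t$ strictly decreases, and the induction runs on $z+t$ alone. Note that a quantity of the form you propose, e.g.\ $z+t+2c$, is \emph{invariant} under that recurrence (the first product term has $(z-1)+(t-1)+2(c+1)=z+t+2c$), so it could not drive the paper's induction; conversely, the band-plus-Theorem-\ref{kuothm2} recurrence you assert by analogy is never exhibited --- you would need to produce the four cells, check which lozenges are forced and with what weights, and confirm that all five companion regions are still $B$-type regions covered by your base cases. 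Until that is done, the induction step is a conjecture rather than a proof, and the closing $q$-hyperfactorial verification you flag as the main obstacle cannot even be set up.
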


\normalsize

\begin{figure}\centering
\includegraphics[width=14cm]{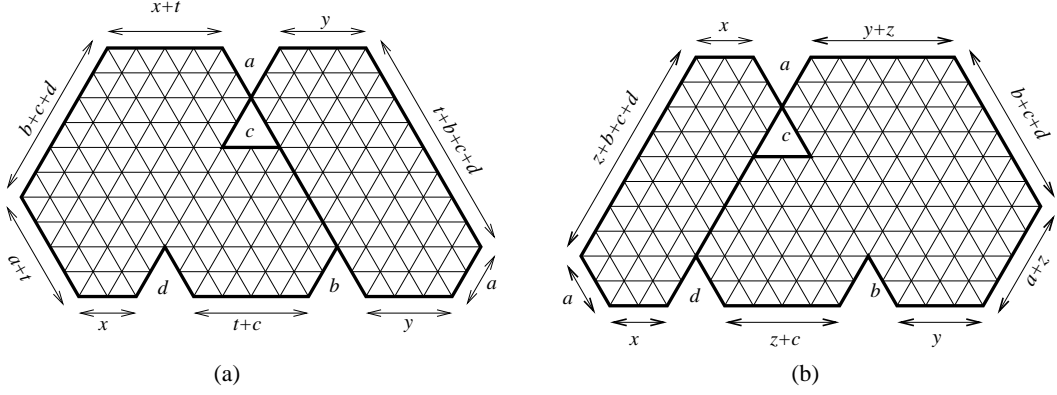}
\caption{Base cases of Theorem \ref{qdentbarthm} when (a) $z=0$ and (b) $t=0$.}
\label{BCdentbar}
\end{figure}

\begin{figure}\centering
\includegraphics[width=14cm]{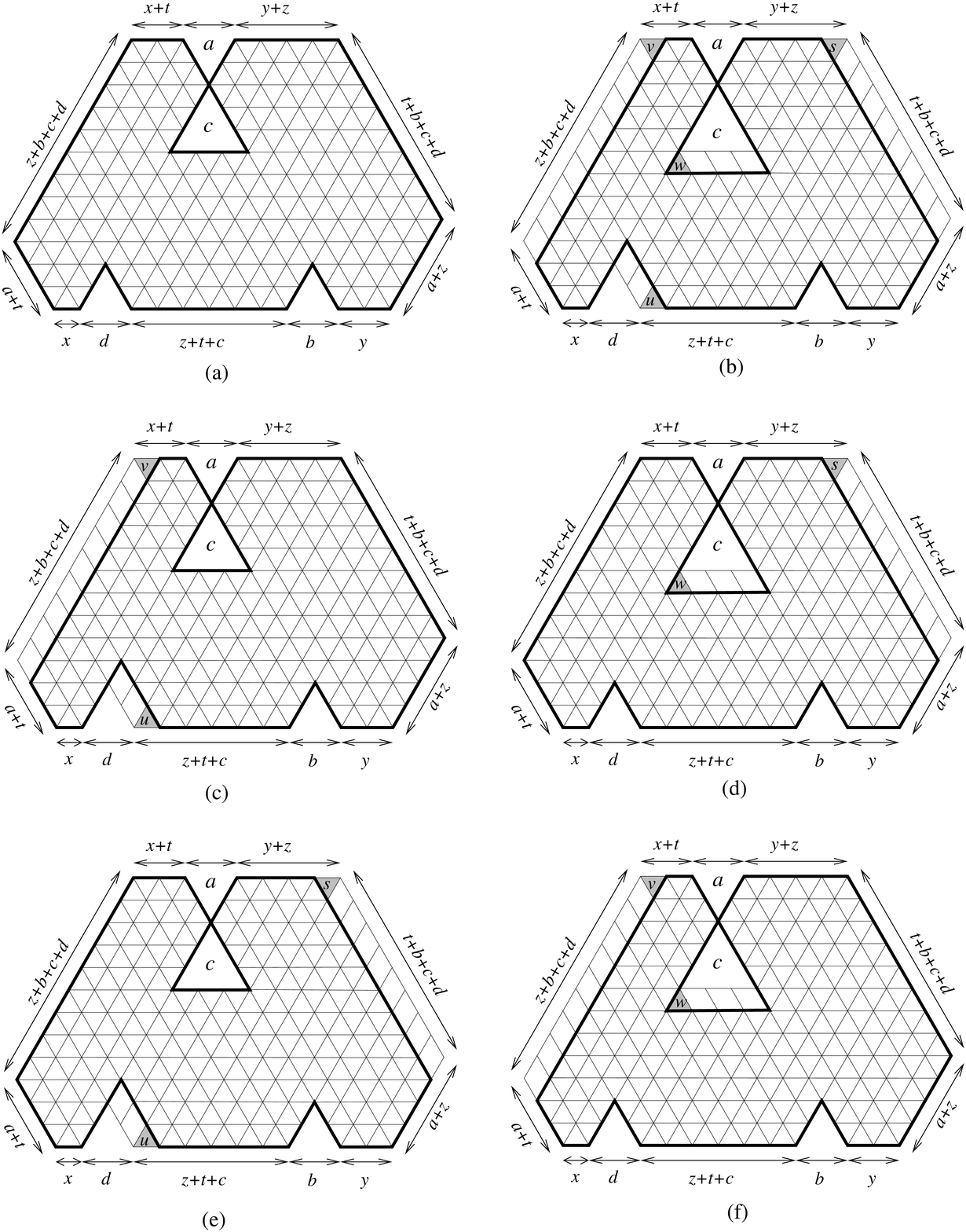}
\caption{Obtaining recurrence of the tiling generating function of $B$-regions.}
\label{Kuodentbarb}
\end{figure}

\begin{proof}
Similar to the proof of Theorem \ref{qpremain1}, we have
\begin{equation}
\frac{\M_2\left(B\begin{pmatrix}x&y&z&t\\a&b&c&d\end{pmatrix}\right)}{\sum_{\pi}q^{|\pi|}}=q^{\sum_{i=1}^{7}a_ib_iy_i+b_ia_i(a_i+1)/2},
\end{equation}
where the component box $D_i$ in $\mathcal{D}$ has size $a_i \times b_i \times c_i$ and where  the base $P_i$ of $D_i$ is $y_i$ units above  the base of the region (shown in Figure \ref{tilingdentbar}(b)), for $i=1,2,\dotsc,7$. Writing $a_i,b_i,y_i$ in terms of $a,b,c,d,x,y,z,t$ from Figures \ref{tilingdentbar}(b) and (c), we get
\begin{align}
\sum_{i=1}^{7}&a_ib_iy_i+b_ia_i(a_i+1)/2=A.
\end{align}
\normalsize Thus, we only need to prove the second equal sign in (\ref{qdentbareq}).

Next, we prove the second equal sign in (\ref{qdentbareq}) by induction on $z+t$. Our base cases are the cases when $z=0$ or $t=0$.

Assume that our region are weighted by $\wt_2$. If $z=0$, we use Region-splitting Lemma \ref{GS} to separate our region into two parts as in Figure \ref{BCdentbar}(a).  The right part is the hexagon $Hex(a,y,t+b+c+d)$ and  the left part is   $Q_{d,c}(x,t,b,a)$;  both parts are weighted by $\wt_2$. Then, we get
\begin{equation}
\M_2\left(B\begin{pmatrix}x&y&0&t\\a&b&c&d\end{pmatrix}\right)=\M_2\big(Hex(a,y,t+b+c+d)\big)\M_2\big(Q_{d,c}(x,t,b,a)\big),
\end{equation}
and the second equal sign in (\ref{qdentbareq}) follows from Corollary \ref{lem0} and Theorem \ref{qpremain1}.
If $t=0$, we also split our region into two subregions as in Figure \ref{BCdentbar}(b). The left subregion is the hexagon $Hex(z+b+c+d,x,a)$ weighted by $\wt_2$.
For the right subregion, we reflect it about a vertical line, and get the region $Q_{b,c}(y,z,d,a)$ weighted by the weight assignment in Theorem \ref{qpremain2}. Therefore, we get
\begin{equation}
\M_2\left(B\begin{pmatrix}x&y&z&0\\a&b&c&d\end{pmatrix}\right)=\M_2\big(Hex(z+b+c+d,x,a)\big)\M_3\big(Q_{b,c}(y,z,d,a)\big),
\end{equation}
and the second equal sign in (\ref{qdentbareq}) is implied by  Corollary \ref{lem0} and Theorem \ref{qpremain2}.

\medskip

For the induction step, we assume that $z,t>0$ and that (\ref{qdentbareq}) holds for any $B$-type regions having the sum of the $z$- and $t$-parameters strictly less than $z+t$.

\medskip

We apply Kuo's Theorem \ref{kuothm1} to the dual graph $G$ of  $B\begin{pmatrix}x&y&z&t\\a&b&c&d\end{pmatrix}$ weighted by $\wt_2$ as in Figure \ref{Kuodentbarb}. In particular, the four shaded unit triangles in Figure \ref{Kuodentbarb}(b) correspond to the four vertices $u,v,w,s$.  The lowest shaded unit triangle corresponds to $u$, and $v,w,s$ correspond to the next shaded unit triangles as we move clockwise from the lowest one. Figure \ref{Kuodentbarb} tells us that the product of the $M_2$-generating functions of the two regions on the top is equal to the product of the $M_2$-generating functions of the two regions in the middle, plus the product of the $M_2$-generating functions of the two regions on the bottom. Since all lozenges forced by the shaded unit triangles in Figure \ref{Kuodentbarb} have weight 1, we get
\begin{align}\label{dentbareq6}
\M_2\left(B\begin{pmatrix}x&y&z&t\\a&b&c&d\end{pmatrix}\right)&\M_2\left(B\begin{pmatrix}x&y&z-1&t-1\\a&b&c+1&d+1\end{pmatrix}\right)=\notag\\
&\M_2\left(B\begin{pmatrix}x&y&z-1&t\\a&b&c&d+1\end{pmatrix}\right)\M_2\left(B\begin{pmatrix}x&y&z&t-1\\a&b&c+1&d\end{pmatrix}\right)\notag\\
&+\M_2\left(B\begin{pmatrix}x&y&z-1&t\\a&b&c+1&d\end{pmatrix}\right)\M_2\left(B\begin{pmatrix}x&y&z&t-1\\a&b&c&d+1\end{pmatrix}\right).
\end{align}
 Finally, we only need to show that the expression after the second equal sign in (\ref{qdentbareq}) satisfies the same recurrence.

By definition of the exponent $A$, we have
\begin{align}\label{Aeq1}
A\begin{pmatrix}x&y&z&t\\a&b&c&d\end{pmatrix}&+A\begin{pmatrix}x&y&z-1&t-1\\a&b&c+1&d+1\end{pmatrix}\notag\\
&=A\begin{pmatrix}x&y&z&t-1\\a&b&c&d+1\end{pmatrix}+A\begin{pmatrix}x&y&z-1&t\\a&b&c+1&d\end{pmatrix}-d-x-t,
\end{align}
\begin{align}\label{Aeq2}
A\begin{pmatrix}x&y&z&t\\a&b&c&d\end{pmatrix}&+A\begin{pmatrix}x&y&z-1&t-1\\a&b&c+1&d+1\end{pmatrix}\notag\\
&=A\begin{pmatrix}x&y&z-1&t\\a&b&c&d+1\end{pmatrix}+A\begin{pmatrix}x&y&z&t-1\\a&b&c+1&d\end{pmatrix}.
\end{align}
\normalsize We denote by $\Phi\begin{pmatrix}x&y&z&t\\a&b&c&d\end{pmatrix}$ the expression after the second equal sign in (\ref{qdentbareq}) \textit{divided by $q^{A}$}. By (\ref{Aeq1}) and (\ref{Aeq1}), we need to show that
\small{\begin{align}\label{qdentbareq7}
q^{d+x+t}\frac{\Phi\begin{pmatrix}x&y&z&t-1\\a&b&c&d+1\end{pmatrix}}{\Phi\begin{pmatrix}x&y&z&t\\a&b&c&d\end{pmatrix}}
&\frac{\Phi\begin{pmatrix}x&y&z-1&t\\a&b&c+1&d\end{pmatrix}}{\Phi\begin{pmatrix}x&y&z-1&t-1\\a&b&c+1&d+1\end{pmatrix}}\notag\\
&+\frac{\Phi\begin{pmatrix}x&y&z&t-1\\a&b&c+1&d\end{pmatrix}}{\Phi\begin{pmatrix}x&y&z&t\\a&b&c&d\end{pmatrix}}
\frac{\Phi\begin{pmatrix}x&y&z-1&t\\a&b&c&d+1\end{pmatrix}}{\Phi\begin{pmatrix}x&y&z-1&t-1\\a&b&c+1&d+1\end{pmatrix}}=1.
\end{align}}
\normalsize
Let us simplify the first term on the left-hand side of (\ref{qdentbareq7}). We note that the numerator and the denominator of the first fraction in the first term are different only at their $d$- and $t$-parameters. By cancelling out all terms without $d$- or $t$-parameter and using the simple fact $\Hf_q(n+1)=[n]_q!\Hf_q(n)$, we can evaluate the first fraction as
\begin{align}\label{qdenbareqn1}
&\frac{\Phi\begin{pmatrix}x&y&z&t-1\\a&b&c&d+1\end{pmatrix}}{\Phi\begin{pmatrix}x&y&z&t\\a&b&c&d\end{pmatrix}}=\frac{[d]_q!}{[t-1]_q!}\frac{[a+b+c+x+y+z+2t-1]_q![a+x+t-1]_q!}{[a+b+c+d+x+z+2t-1]_q![a+c+z+t-1]_q!}\notag\\
&\times \frac{[a+c+d+x+z+2t-1]_q![a+b+c+y+2z+t-1]_q![b+c+z+t-1]_q!}{[a+b+c+d+x+y+2z+2t-1]_q![a+b+c+y+z+t-1]_q!}.
\end{align}
Similarly, the second fraction of the first term can be written as
\begin{align}\label{qdenbareqn2}
&\frac{\Phi\begin{pmatrix}x&y&z-1&t\\a&b&c+1&d\end{pmatrix}}{\Phi\begin{pmatrix}x&y&z-1&t-1\\a&b&c+1&d+1\end{pmatrix}}=\frac{[t-1]_q!}{[d]_q!}\frac{[a+b+c+x+z+2t-1]_q![a+c+z+t-1]_q!}{[a+b+c+d+x+y+z+2t-1]_q![a+x+t-1]_q!}\notag\\
&\times \frac{[a+b+c+d+x+y+2z+2t-2]_q![a+b+c+y+z+t-1]_q!}{[a+c+d+x+z+2t-1]_q![a+b+c+y+2z+t-2]_q![b+c+z+t-1]_q!}.
\end{align}
By (\ref{qdenbareqn1}) and (\ref{qdenbareqn2}), we obtain
\begin{align}
\frac{\Phi\begin{pmatrix}x&y&z&t-1\\a&b&c&d+1\end{pmatrix}}{\Phi\begin{pmatrix}x&y&z&t\\a&b&c&d\end{pmatrix}}
\frac{\Phi\begin{pmatrix}x&y&z-1&t\\a&b&c+1&d\end{pmatrix}}
{\Phi\begin{pmatrix}x&y&z-1&t-1\\a&b&c+1&d+1\end{pmatrix}}=\frac{[a+b+c+y+2z+t-1]_q}{[a+b+c+d+x+y+2z+2t-1]_q}.
\end{align}
Similarly, we can simplify the second term as
\begin{align}
\frac{\Phi\begin{pmatrix}x&y&z&t-1\\a&b&c+1&d\end{pmatrix}}{\Phi\begin{pmatrix}x&y&z&t\\a&b&c&d\end{pmatrix}}
\frac{\Phi\begin{pmatrix}x&y&z-1&t\\a&b&c&d+1\end{pmatrix}}
{\Phi\begin{pmatrix}x&y&z-1&t-1\\a&b&c+1&d+1\end{pmatrix}}=\frac{[d+x+t]_q}{[a+b+c+d+x+y+2z+2t-1]_q}.
\end{align}
Then (\ref{qdentbareq7}) becomes the following equation
\begin{equation}
\frac{q^{d+x+t}[a+b+c+y+2z+t-1]_q}{[a+b+c+d+x+y+2z+2t-1]_q}+\frac{[d+x+t]_q}{[a+b+c+d+x+y+2z+2t-1]_q}=1,
\end{equation}
which follows directly from the definition of the $q$-integer. This completes our proof.
\end{proof}

\section{Proof of Theorem \ref{qmain}}

By the equality (\ref{ratioprop}) in Proposition \ref{ratioprop}, we only need to show that

\small{\begin{align}\label{qmaineq}
&\M_2(F)=\notag\\
&=q^{\textbf{h}}\frac{\Hf_q(x)\Hf_q(y)\Hf_q(z)\Hf_q(a)^2\Hf_q(b)^2\Hf_q(c)^2\Hf_q(d)\Hf_q(e)\Hf_q(f)\Hf_q(d+e+f+x+y+z)^4}
{\Hf_q(a+d)\Hf_q(b+e)\Hf_q(c+f)\Hf_q(d+e+x+y+z)\Hf_q(e+f+x+y+z)\Hf_q(f+d+x+y+z)}\notag\\
 &\times \frac{\Hf_q(A+2x+2y+2z)\Hf_q(A+x+y+z)^2}{\Hf_q(A+2x+y+z)\Hf_q(A+x+2y+z)\Hf_q(A+x+y+2z)}\notag\\
  &\times \frac{\Hf_q(a+b+d+e+x+y+z)\Hf_q(a+c+d+f+x+y+z)\Hf_q(b+c+e+f+x+y+z)}{\Hf_q(a+d+e+f+x+y+z)^2\Hf_q(b+d+e+f+x+y+z)^2\Hf_q(c+d+e+f+x+y+z)^2}\notag\\
  &\times\frac{\Hf_q(a+d+x+y)\Hf_q(b+e+y+z)\Hf_q(c+f+z+x)}{\Hf_q(a+b+y)\Hf_q(b+c+z)\Hf_q(c+a+x)}\notag\\
  &\times \frac{\Hf_q(A-a+x+y+2z)\Hf_q(A-b+2x+y+z)\Hf_q(A-c+x+2y+z)}{\Hf_q(b+c+e+f+x+y+2z)\Hf_q(c+a+d+f+2x+y+z)\Hf_q(a+b+d+e+x+2y+z)}.
\end{align}}

\normalsize

\begin{figure}\centering
\includegraphics[width=14cm]{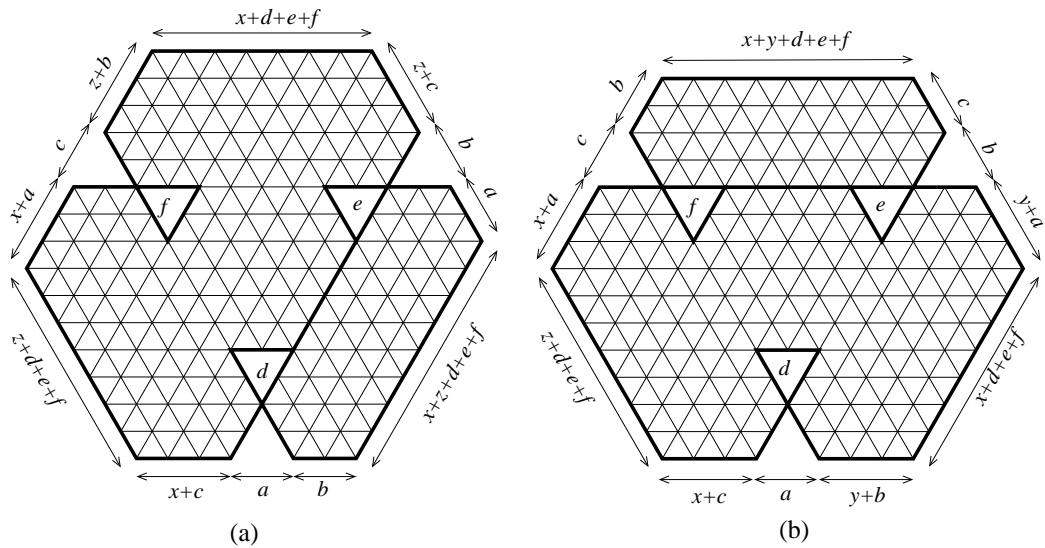}
\caption{Base cases of the Theorem \ref{main}: (a) $y=0$ and (b) $z=0$.}
\label{BC3dent}
\end{figure}

\normalsize We prove (\ref{qmaineq}) by induction on $y+z$, with the base cases are $y=0$ and $z=0$.

If $y=0$,  assume that our region is weighted by $wt_1$. We split up the region into two parts by using Region-splitting Lemma \ref{GS} as in Figure \ref{BC3dent}(a). The right part is the hexagon $Hex(x+z,d+e+f,b,a)$ weighted by $wt_1$.  For the left part, we rotate it $60^{\circ}$ clockwise, and reflect the resulting region about a vertical line. This way, we get the region $B\begin{pmatrix}b&a&x&z\\c&d&f&e\end{pmatrix}$  weighted by $\wt_2$. In particular, we get
\begin{equation}
\M_1\left(F\begin{pmatrix}x&0&z\\a&b&c\\d&e&f\end{pmatrix}\right)=\M_1\big(Hex(x+z+d+e+f,b,a)\big)\M_2\left(B\begin{pmatrix}b&a&x&z\\c&d&f&e\end{pmatrix}\right).
\end{equation}
Then (\ref{qmaineq}) follows from Proposition \ref{ratioprop}, Corollary \ref{lem0}  and Theorem \ref{qdentbarthm}.

If $z=0$, we weight our region by $wt_2$,  and split it into two parts as in Figure \ref{BC3dent}(b). Dividing the weight of each right lozenge in the upper part by $q^{x+z+a+d+e+f}$, we get the hexagon $Hex(b,x+y+d+e+f,c)$ weighted by $wt_2$. For the bottom part, we rotate it $180^{\circ}$ counter-clockwise, and get the region $B\begin{pmatrix}b&c&x&y\\a&f&d&e\end{pmatrix}$ in which a right lozenge is weighted by $q^{x+z+a+d+e+f+1-l}$, where $l$ is the distance between the top of the lozenge and the bottom of the $B$-type region (and all left and vertical lozenges are weighted by $1$ as usual). Dividing the weight of each right lozenge by $q^{x+z+a+d+e+f+1}$, we get back the weight $\wt_2$, \textit{where $q$ is replaced by $q^{-1}$}. By Corollary \ref{lem0}, Theorem \ref{qdentbarthm} and the simple fact $[n]_{q^{-1}}=[n]_q/q^{n-1}$, we obtain (\ref{qmaineq}).

\medskip

For induction step, we assume that $y$ and $z$ are positive, and that (\ref{qmaineq}) holds for any $F$-type region having the sum of $y$- and $z$-parameters strictly less than $y+z$.

Apply Kuo's Theorem \ref{kuothm2} to the dual graph $G$ of the region $R$ weighted by $\wt_2$, where $R$ is the region obtained from $F\begin{pmatrix}x&y&z\\a&b&c\\d&e&f\end{pmatrix}$ by adding a band of $2(z+d+r+f)-1$ unit triangles along the southwest side (see Figure \ref{Kuo3dent}; the four vertices $u,v,w,s$ correspond to the four shaded unit triangles).  By removing lozenges forced by the shaded unit triangles as in Figure \ref{Kuo3dentb}, we get a new $F$-type region having the same $M_2$-generating function (since all the forced lozenges have weight $1$).
Theorem \ref{kuothm2} and  Figure \ref{Kuo3dentb} gives  us the following recurrence:
\small{\begin{align}\label{3denteq1}
\M_2\left(F\begin{pmatrix}x&y&z\\a&b&c\\d&e&f\end{pmatrix}\right)&\M_2\left(F\begin{pmatrix}x+1&y-1&z-1\\a&b&c\\d&e+1&f\end{pmatrix}\right)=\notag\\
&\M_2\left(F\begin{pmatrix}x+1&y&z-1\\a&b&c\\d&e&f\end{pmatrix}\right)\M_2\left(F\begin{pmatrix}x&y-1&z\\a&b&c\\d&e+1&f\end{pmatrix}\right)\notag\\
&+\M_2\left(F\begin{pmatrix}x+1&y-1&z\\a&b&c\\d&e&f\end{pmatrix}\right)\M_2\left(F\begin{pmatrix}x&y&z-1\\a&b&c\\d&e+1&f\end{pmatrix}\right).
\end{align}}
\normalsize 
Our next job is checking that the expression on the right-hand side of (\ref{qmaineq}) satisfies the same recurrence.

\begin{figure}\centering
\includegraphics[width=8cm]{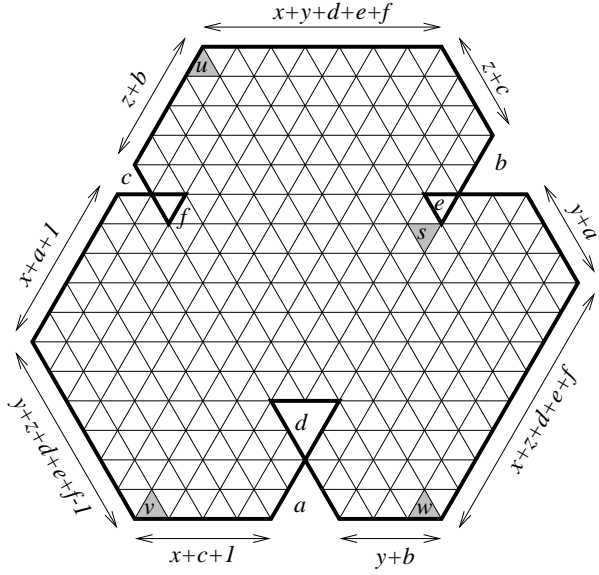}
\caption{How we apply Kuo condensation to a hexagon with three dents.}
\label{Kuo3dent}
\end{figure}
\begin{figure}\centering
\includegraphics[width=14cm]{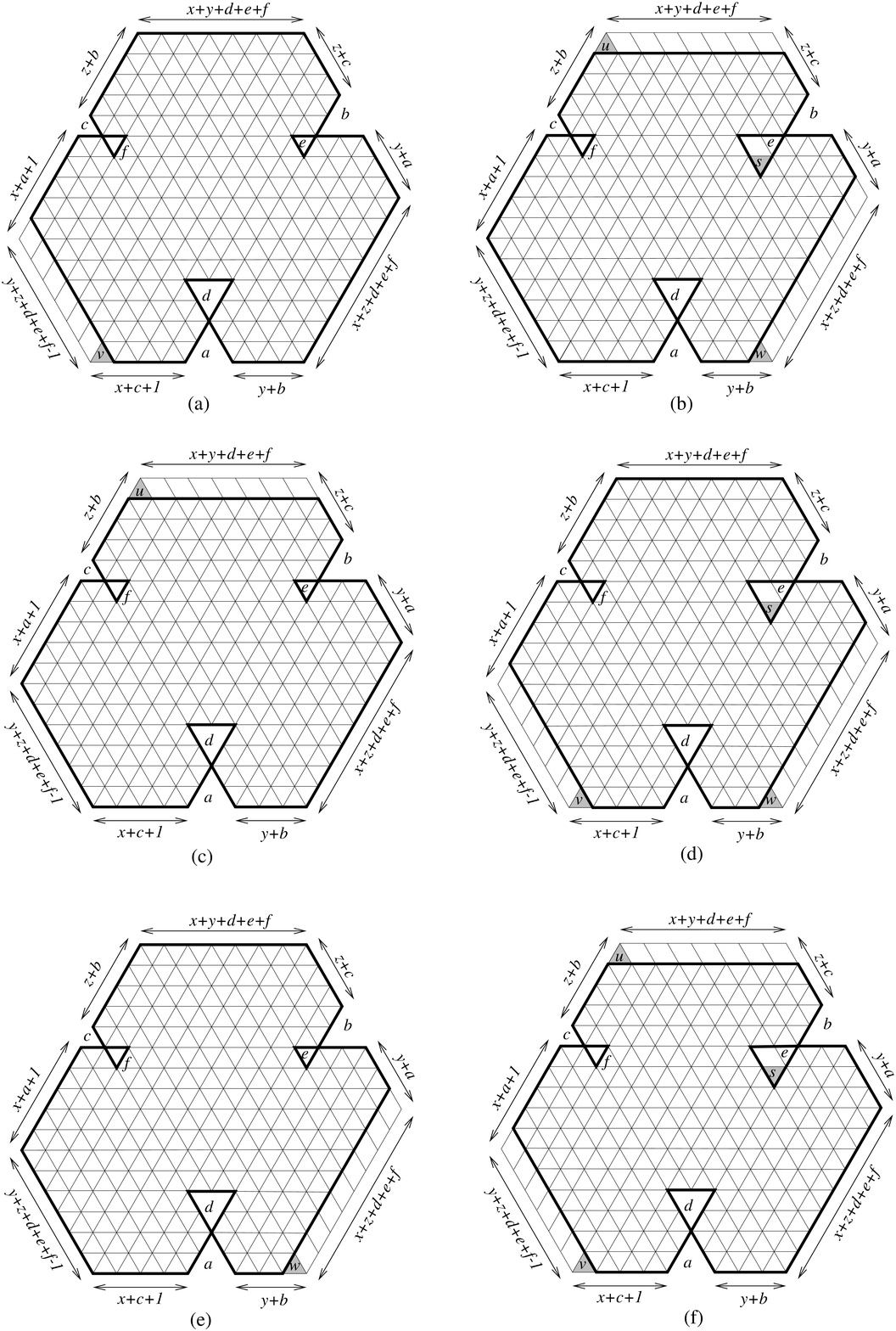}
\caption{Obtaining recurrence of the $M_2$-generating functions of tilings of $F$-type regions.}
\label{Kuo3dentb}
\end{figure}

By definition of the exponent $\textbf{h}$, we have
\small{\begin{align}
\textbf{h}\begin{pmatrix}x+1&y&z-1\\a&b&c\\d&e&f\end{pmatrix}+\textbf{h}\begin{pmatrix}x&y-1&z\\a&b&c\\d&e+1&f\end{pmatrix}=
\textbf{h}\begin{pmatrix}x&y&z\\a&b&c\\d&e&f\end{pmatrix}+\textbf{h}\begin{pmatrix}x+1&y-1&z-1\\a&b&c\\d&e+1&f\end{pmatrix}
\end{align}}
\normalsize and
\small{\begin{align}
\textbf{h}\begin{pmatrix}x+1&y-1&z\\a&b&c\\d&e&f\end{pmatrix}+&\textbf{h}\begin{pmatrix}x&y&z-1\\a&b&c\\d&e+1&f\end{pmatrix}=\notag\\
&a+d+x+y+\textbf{h}\begin{pmatrix}x&y&z\\a&b&c\\d&e&f\end{pmatrix}+\textbf{h}\begin{pmatrix}x+1&y-1&z-1\\a&b&c\\d&e+1&f\end{pmatrix}.
\end{align}}
\normalsize Denote by $\Psi\begin{pmatrix}x&y&z\\a&b&c\\d&e&f\end{pmatrix}$ the expression on the right-hand side of (\ref{qmaineq})  \textit{divided by $q^{\textbf{h}}$} (i.e. the expression on the right-hand side of the equation (\ref{maineqbox})). We need to show that
\small{\begin{align}\label{qcheck2}
\frac{\Psi\begin{pmatrix}x+1&y&z-1\\a&b&c\\d&e&f\end{pmatrix}}{\Psi\begin{pmatrix}x&y&z\\a&b&c\\d&e&f\end{pmatrix}}
&\frac{\Psi\begin{pmatrix}x&y-1&z\\a&b&c\\d&e+1&f\end{pmatrix}}{\Psi\begin{pmatrix}x+1&y-1&z-1\\a&b&c\\d&e+1&f\end{pmatrix}}\notag\\
&+q^{a+d+x+y}\frac{\Psi\begin{pmatrix}x+1&y-1&z\\a&b&c\\d&e&f\end{pmatrix}}{\Psi\begin{pmatrix}x&y&z\\a&b&c\\d&e&f\end{pmatrix}}
\frac{\Psi\begin{pmatrix}x&y&z-1\\a&b&c\\d&e+1&f\end{pmatrix}}{\Psi\begin{pmatrix}x+1&y-1&z-1\\a&b&c\\d&e+1&f\end{pmatrix}}=1.
\end{align}}
\normalsize We will simplify the two terms on the left-hand side of (\ref{qcheck2}). First, we evaluate the first fraction in the first term with the notice that its numerator and denominator are different only at $x$- and $t$-parameters. Cancelling out all terms without $x$- or $t$-parameter and using the trivial fact $\Hf_q(n+1)=[n]_q!\Hf_q(n)$, we get
\small{\begin{align}\label{3denteq3}
&\frac{\Psi\begin{pmatrix}x+1&y&z-1\\a&b&c\\d&e&f\end{pmatrix}}{\Psi\begin{pmatrix}x&y&z\\a&b&c\\d&e&f\end{pmatrix}}=\frac{[x]_q!}{[z-1]_q!}\frac{[A+x+y+2z-1]_q!}{[A+2x+y+z]_q!}\frac{[a+d+x+y]_q![b+c+z-1]_q!}{[b+e+y+z]_q![a+c+x]_q!}\notag\\
&\times\frac{[A-b+2x+y+z]_q![b+c+e+f+x+y+z-1]_q!}{[A-a+x+y+2z-1]_q![a+c+d+f+2x+y+z]_q!}.
\end{align}}
\normalsize Similarly, one can evaluate the second fraction of the first term as
\small{\begin{align}\label{3denteq4}
&\frac{\Psi\begin{pmatrix}x&y-1&z\\a&b&c\\d&e+1&f\end{pmatrix}}{\Psi\begin{pmatrix}x+1&y-1&z-1\\a&b&c\\d&e+1&f\end{pmatrix}}=\frac{[z-1]_q!}{[x]_q!}\frac{[A+2x+y+z]_q!}{[A+x+y+2z-1]_q!}\frac{[b+e+y+z-1]_q![a+c+x]_q!}{[a+d+x+y-1]_q![b+c+z-1]_q!}\notag\\
&\times\frac{[A-a+x+y+2z-1]_q![a+c+d+f+2x+y+z-1]_q!}{[A-b+2x+y+z]_q![b+c+e+f+x+y+z-1]_q!}.
\end{align}}
\normalsize By (\ref{3denteq3}) and (\ref{3denteq4}), we get the first term simplified as
\small{\begin{equation}
\frac{\Psi\begin{pmatrix}x+1&y&z-1\\a&b&c\\d&e&f\end{pmatrix}}{\Psi\begin{pmatrix}x&y&z\\a&b&c\\d&e&f\end{pmatrix}}
\frac{\Psi\begin{pmatrix}x&y-1&z\\a&b&c\\d&e+1&f\end{pmatrix}}{\Psi\begin{pmatrix}x+1&y-1&z-1\\a&b&c\\d&e+1&f\end{pmatrix}}=\frac{[a+d+x+y]_q}{[a+c+d+f+2x+y+z]_q}.
\end{equation}}
\normalsize Similarly, we have
\small{\begin{equation}
\frac{\Psi\begin{pmatrix}x+1&y-1&z\\a&b&c\\d&e&f\end{pmatrix}}{\Psi\begin{pmatrix}x&y&z\\a&b&c\\d&e&f\end{pmatrix}}
\frac{\Psi\begin{pmatrix}x&y&z-1\\a&b&c\\d&e+1&f\end{pmatrix}}{\Psi\begin{pmatrix}x+1&y-1&z-1\\a&b&c\\d&e+1&f\end{pmatrix}}=\frac{[c+f+x+z]_q}{[a+c+d+f+2x+y+z]_q}.
\end{equation}}
\normalsize Therefore, the equation (\ref{qcheck2}) is equivalent to the following equation
\begin{equation}
\frac{[a+d+x+y]_q}{[a+c+d+f+2x+y+z]_q}+\frac{q^{a+d+x+y}[c+f+x+z]_q}{[a+c+d+f+2x+y+z]_q}=1,
\end{equation}
which is obviously true by the definition of the $q$-integer. This finishes our proof.

\section{Plane partitions with constraints}

In this section, we present a consequence of  Theorem \ref{main} on enumeration of (ordinary) plane partitions with certain constraints.

\begin{figure}\centering
\includegraphics[width=14cm]{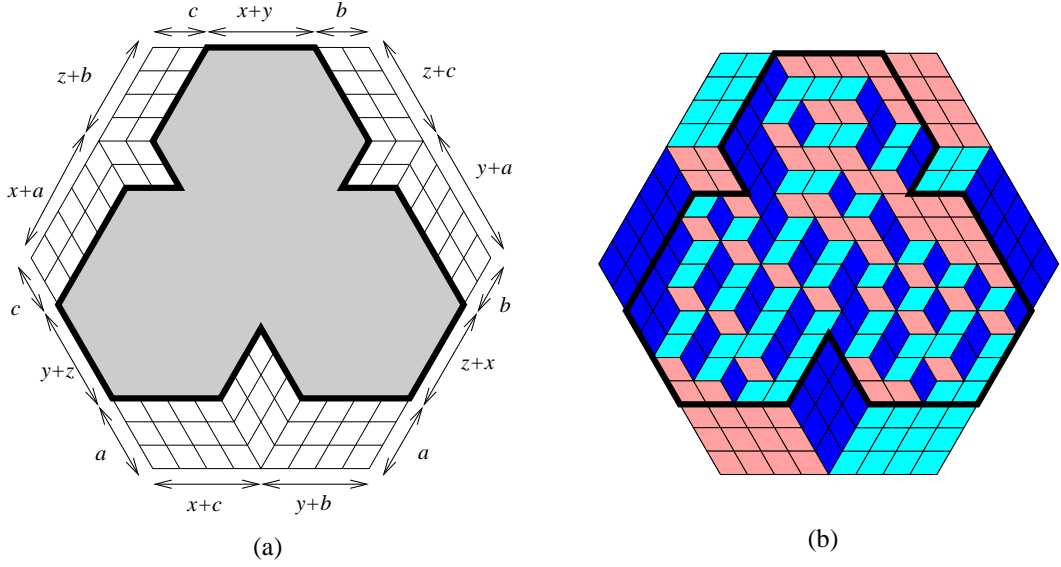}%
\caption{(a) Adding forced lozenges along the northeast, the northwest and the south sides of the region $N_{3,2,2}(2,2,2)$ (the shaded region restricted by the bold contour) (b) Viewing a the lozenge tiling  of $N_{3,2,2}(2,2,2)$, where forced lozenges have been added, as a pile of unit cubes fitting in a $ 9\times8\times 9$ box.}
\label{PPconstraint}
\end{figure}

Let us assume that the $d$-, $e$- and $f$-parameters in our $F$-type region are equal to $0$.  The region $N_{a,b,c}(x,y,z):=F\begin{pmatrix}x&y&z\\a&b&c\\0&0&0\end{pmatrix}$ becomes a hexagon with three triangular dents on three non-consecutive sides.  By adding the forced lozenges as in Figure \ref{PPconstraint}(a) to each tiling of $N_{a,b,c}(x,y,z)$, we get a lozenge tiling of the semi-regular hexagon $Hex(z+x+a+b,x+y+b+c,y+z+c+a)$. The latter lozenge tiling in turn corresponds to a plane partition (in the tabular form) having $z+x+a+b$ rows and $x+y+b+c$ columns with entries at most $y+z+c+a$ (or, a pile of unit cubes fitting in a $(z+x+a+b)\times(x+y+b+c)\times(y+z+c+a)$ box as in  Figure \ref{PPconstraint}(b)). The forced lozenges give certain constraints on this plane partition. In particular, the forced lozenges on the northwest side imply that:
\begin{enumerate}
\item[(i).] \textit{The first $z+b$ entries of the columns $1,2,\dotsc,c$ are all $y+z+c+a$. Moreover, the remaining entries in these columns are at most $y+z+a$.}
\end{enumerate}
The forced lozenges on the northeast side say:
\begin{enumerate}
\item[(ii).]  \textit{The last $a$ entries of the rows $1,2,\dotsc, b$ are all $y+a$.}
\end{enumerate}
Finally, the forced lozenges along the south side is equivalent to the fact that:
\begin{enumerate}
\item[(iii).] \textit{The last $y+b$ entries of the rows $z+x+b+1,z+x+b+2,\dotsc,z+x+b+a$ are all $0$; and the remaining entries of these rows are at least $a$.}
\end{enumerate}


Thus by Theorem \ref{main}, we get the following enumeration.

\begin{cor} Let $a,b,c,x,y,z$ be non-negative integers. The number of plane partitions having $z+x+a+b$ rows and $x+y+b+c$ columns with entries at most $y+z+c+a$, where the constraints $(i)$, $(ii)$ and $(iii)$ hold, is equal to
\begin{align}
 &\Hf(x)\Hf(y)\Hf(z)\Hf(a)\Hf(b)\Hf(c)\Hf(x+y+z)\notag\\
 &\times \frac{\Hf(a+b+c+2x+2y+2z)\Hf(a+b+c+x+y+z)^2}{\Hf(a+b+c+2x+y+z)\Hf(a+b+c+x+2y+z)\Hf(a+b+c+x+y+2z)}\notag\\
  &\times \frac{\Hf(a+b+x+y+z)\Hf(a+c+x+y+z)\Hf(b+c+x+y+z)}{\Hf(a+x+y+z)^2\Hf(b+x+y+z)^2\Hf(c+x+y+z)^2}\notag\\
  &\times\frac{\Hf(a+x+y)\Hf(b+y+z)\Hf(c+z+x)}{\Hf(a+b+y)\Hf(b+c+z)\Hf(c+a+x)}.
\end{align}
\end{cor}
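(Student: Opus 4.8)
The plan is to deduce the corollary directly from Theorem~\ref{main} by specializing $d=e=f=0$ and then collapsing the product; the genuine combinatorial content is the bijection already set up in the discussion preceding the statement, and the remainder is a routine simplification.

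First I would make the bijection precise. Adding the forced lozenges along the three sides (Figure~\ref{PPconstraint}(a)) turns each tiling of $N_{a,b,c}(x,y,z)=F\begin{pmatrix}x&y&z\\a&b&c\\0&0&0\end{pmatrix}$ into a tiling of the semi-regular hexagon $Hex(z+x+a+b,x+y+b+c,y+z+c+a)$, and conversely a tiling of that hexagon arises in this way exactly when these lozenges are present. Under the standard correspondence between tilings of $Hex(\alpha,\beta,\gamma)$ and plane partitions in an $\alpha\times\beta\times\gamma$ box, the presence of the forced lozenges translates precisely into constraints $(i)$, $(ii)$, $(iii)$. Hence the constrained plane partitions are in bijection with the tilings of $N_{a,b,c}(x,y,z)$, and it remains only to count the latter. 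The one point to check carefully is that $(i)$--$(iii)$ are not merely \emph{necessary} but also \emph{sufficient}: every plane partition satisfying them must display the entire forced configuration, so that deleting those lozenges returns a legitimate tiling of $N_{a,b,c}(x,y,z)$.

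Next I would invoke Theorem~\ref{main} with $d=e=f=0$, so that $A=a+b+c$ and $\Hf(d)=\Hf(e)=\Hf(f)=\Hf(0)=1$. Substituting into (\ref{maineq}), the first line collapses (the $\Hf(x+y+z)^4$ against $\Hf(x+y+z)^3$, and $\Hf(a)^2\Hf(b)^2\Hf(c)^2$ against $\Hf(a)\Hf(b)\Hf(c)$) to
\[
\Hf(x)\Hf(y)\Hf(z)\Hf(a)\Hf(b)\Hf(c)\Hf(x+y+z),
\]
matching the first line of the corollary, while the second, third, and fourth lines of (\ref{maineq}) reproduce verbatim the corresponding lines of the corollary once $d=e=f=0$ is imposed. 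The crux is the fifth line: since $A-a=b+c$, $A-b=a+c$, and $A-c=a+b$, each numerator factor $\Hf(A-a+x+y+2z)$, $\Hf(A-b+2x+y+z)$, $\Hf(A-c+x+2y+z)$ coincides exactly with the matching denominator factor $\Hf(b+c+x+y+2z)$, $\Hf(c+a+2x+y+z)$, $\Hf(a+b+x+2y+z)$, so the whole fifth line equals $1$ and vanishes. Collecting the surviving factors yields precisely the stated product.

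Because the algebraic specialization is completely mechanical, I expect the only real obstacle to be the surjectivity of the bijection, i.e.\ verifying that constraints $(i)$--$(iii)$ force exactly the lozenge configuration of Figure~\ref{PPconstraint}(a). This should follow by reading off the forced lozenges side-by-side (northwest, northeast, south) and confirming that each geometric condition is equivalent to one of the three numbered constraints, but it is the step that must genuinely be written out rather than merely quoted.
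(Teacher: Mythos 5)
Your proposal is correct and follows exactly the paper's own route: the paper establishes the bijection by adjoining the forced lozenges to tilings of $N_{a,b,c}(x,y,z)=F\begin{pmatrix}x&y&z\\a&b&c\\0&0&0\end{pmatrix}$, reads off constraints $(i)$--$(iii)$, and then cites Theorem~\ref{main} with $d=e=f=0$; your simplification of the product (including the cancellation of the fifth line via $A-a=b+c$, etc.) is accurate. The sufficiency point you flag is real but is treated at the same level of detail in the paper itself, so there is no divergence in approach.
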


\section{Concluding remarks}
This paper gives an instance of  the great power of Kuo condensation in $q$-enumeration of lozenge tilings of hexagons with dents. The author has realized that there are many more `nice' $q$-enumerations that can be proven by the same method (details will appear in a separate paper).

Moreover, Kuo condensation can be used to give direct proofs for Corollary \ref{lem0} and Lemma \ref{qlem1}.



Finally, the author wants to thank David Wilson for introducing the q-mode of his software, \texttt{vaxmacs 1.6e} (available for downloading
at \url{http://dbwilson.com/vaxmacs/}), which is really helpful in verifying the formulas in this paper.

%
%

\end{document}